\renewcommand{\(}{\left(}
\renewcommand{\)}{\right)}
\renewcommand{\[}{\left[}
\renewcommand{\]}{\right]}
\newcommand{\m}{\mathbf{m}}
\newcommand{\s}{\mathbf{s}}
\renewcommand{\S}{\mathbf{S}}
\newcommand{\y}{\mathbf{y}}
\renewcommand{\H }{\mathbf{H}}
\newcommand{\z}{\mathbf{z}}
\newcommand{\w}{\mathbf{w}}
\newcommand{\p}{\mathbf{p}}
\newcommand{\W}{\mathbf{W}}
\newcommand{\E}{\mathbf{E}}
\newcommand{\R}{\mathbf{R}}
\renewcommand{\r}{\mathbf{r}}
\newcommand{\D}{\mathbf{D}}
\newcommand{\Z}{\mathbf{Z}}
\newcommand{\x}{\mathbf{x}}
\newcommand{\I}{\mathbf{I}}
\newcommand{\J}{\mathbf{J}}
\renewcommand{\P}{\mathbf{P}}
\newcommand{\A}{\mathbf{A}}
\newcommand{\U}{\mathbf{U}}
\newcommand{\M}{\mathbf{M}}
\renewcommand{\u}{\mathbf{u}}
\newcommand{\Q}{\mathbf{Q}}
\newcommand{\q}{\mathbf{q}}
\newcommand{\X}{\mathbf{X}}
\newcommand{\Y}{\mathbf{Y}}
\newcommand{\Tr}[1]{{\rm{Tr}}\left(#1\right)}
\newcommand{\End}[1]{{\rm{End}}}
\renewcommand{\arg}[1]{{\rm{arg}}#1}
\newcommand{\diag}[1]{{\rm{diag}}\left\{#1\right\}}
\renewcommand{\vec}[1]{{\rm{vec}}\(#1\)}
\newcommand{\vech}[1]{{\rm{vech}}\(#1\)}
\newcommand{\mat}[1]{{\rm{mat}}\(#1\)}
\newcommand{\rank}[1]{{\rm{rank}}#1}
\newtheorem{lemma}{Lemma}
\newtheorem{definition}{Definition}
\newtheorem{theorem}{Theorem}
\newtheorem{corollary}{Corollary}
\newtheorem{assm}{Assumption}
\DeclareMathOperator*{\argmin}{\arg\!\min}
\newcommand{\norm}[1]{\left\lVert#1\right\rVert}
\begin{document}
\title{Joint Covariance Estimation with Mutual Linear Structure}
\author{Ilya~Soloveychik and Ami~Wiesel, \\
 Rachel and Selim Benin School of Computer Science and Engineering, The Hebrew University of Jerusalem, Israel
\thanks{This work was partially supported by the Intel Collaboration Research Institute for Computational Intelligence and Israel Science Foundation.}
\thanks{The results were partially presented at the $40$-th IEEE International Conference on Acoustics, Speech, and Signal Processing, April, 19-24, 2015, Brisbane, Australia.}
}

\maketitle
\begin{abstract}
We consider the problem of joint estimation of structured covariance matrices. Assuming the structure is unknown, estimation is achieved using heterogeneous training sets. Namely, given groups of measurements coming from centered populations with different covariances, our aim is to determine the mutual structure of these covariance matrices and estimate them. Supposing that the covariances span a low dimensional affine subspace in the space of symmetric matrices, we develop a new efficient algorithm discovering the structure and using it to improve the estimation. Our technique is based on the application of principal component analysis in the matrix space. We also derive an upper performance bound of the proposed algorithm in the Gaussian scenario and compare it with the Cramer-Rao lower bound. Numerical simulations are presented to illustrate the performance benefits of the proposed method.
\end{abstract}

\begin{IEEEkeywords}
Structured covariance estimation, joint covariance estimation.
\end{IEEEkeywords}

\IEEEpeerreviewmaketitle

\section{Introduction}
Large scale covariance matrix estimation using a small number of measurements is a fundamental problem in modern multivariate statistics. In such environments the Sample Covariance Matrix (SCM) may provide a poor estimate of the true covariance. The scarce amount of samples is especially a problem in financial data analysis where few stationary monthly observations of numerous stock indexes are used to estimate the joint covariance matrix of the stock returns \cite{laloux2000random, ledoit2003improved}, bioinformatics where clustering of genes is obtained based on gene sequences sampled from a small population \cite{schafer2005shrinkage}, computational immunology where correlations among mutations in viral strains are estimated from sampled viral sequences and used as a basis of novel vaccine design \cite{dahirel2011coordinate, quadeer2013statistical}, psychology where the covariance matrix of multiple psychological traits is estimated from data collected on a group of tested individuals \cite{steiger1980tests}, or electrical engineering at large where signal samples extracted from a possibly short time window are used to retrieve parameters of the signal \cite{scharf1991statistical}. 

The most common approach to work around the sample deficiency problem is to introduce prior information, which reduces the number of degrees of freedom in the model and allows accurate estimation with few samples. Prior knowledge on the structure can originate from the physics of the underlying phenomena or from similar datasets, see e.g. \cite{fuhrmann1991application, pollock2002circulant, stoica2011spice}. The most widely used approaches here are shrinkage towards a known matrix \cite{ledoit2004well, chen2011robust}, or assuming the true covariance matrix to be structured \cite{snyder1989use, abramovich2007time, wiesel2013time, dembo1989embedding, cai2013optimal, friedman2008sparse, banerjee2008model, cai2012optimal, bickel2008regularized}. We focus on prior knowledge in the form of structural constraints, which are most commonly affine. Probably the most popular of them is the Toeplitz model \cite{snyder1989use, abramovich2007time, wiesel2013time,kavcic2000matrices, asif2005block,fuhrmann1991application, roberts2000hidden,bickel2008regularized}, closely related to it are circulant matrices \cite{dembo1989embedding, cai2013optimal}. In other settings the number of parameters can be reduced by assuming that the covariance matrix is sparse \cite{friedman2008sparse, banerjee2008model, cai2012optimal}. A popular sparse model is the banded covariance, which is associated with time-varying moving average models \cite{bickel2008regularized,wiesel2013time}. Other examples of structured covariances include factor models \cite{engle1990asset}, permutation invariant models \cite{shah2012group}, and many others.

An important common feature of the works listed in the previous paragraph is that they consider a single and static environment where the structure of the true parameter matrix, or at least the class of structures, as in the sparse case, is known in advance. Often, this is not the case and techniques are needed to learn the structure from the observations. A typical approach is to consider multiple datasets sharing a similar structure but non homogeneous environments \cite{guo2011joint, besson2008covariance, bidon2008bayesian}. This is, for example, the case in covariance estimation for classification across multiple classes \cite{danaher2014joint}. A related problem addresses tracking a time varying covariance throughout a stream of data \cite{wiesel2013time, moulines2005recursive}, where it is assumed that the structure changes at a slower rate than the covariances themselves \cite{ahmed2009recovering}. Here too, it is natural to divide this stream of data into independent blocks of measurements.

Our goal is to first rigorously state the problem of joint covariance estimation with linear structure, and derive lower performance bounds for the family of unbiased estimators. Secondly, we propose and analyze new algorithms of learning and exploiting the linear structure to improve the covariance estimation. More exactly, given a few groups of measurements having different second moments each, our target is to determine the underlying low dimensional affine subspace containing or approximately containing the covariance of all the groups. The discovered hyperplane is further used to improve the matrix parameter estimation. Most of the previous works considered particular cases of this method, e.g. factor models, entry-wise linear structures like in sparse and banded cases, or specific patterns like in Toeplitz, circulant and other models. Our algorithm treats the SCM of the heterogeneous populations as vectors in the space of matrices and is based on application of the principal component analysis (PCA) to learn their low-dimensional structure. To make the performance analysis a tractable problem, we assume the data is Gaussian in the corresponding sections, however noting that the algorithm itself provides a good estimation in a much wider class of distributions.

The paper is organized as following. First, we introduce the notations, state the problems and provide examples of affine structures motivating the work. Then we derive the lower performance bound for a class of unbiased estimators. Next we propose our algorithm and provide its upper performance bound. We conclude by numerical simulations demonstrating the performance advantages of the proposed technique and supporting our theoretical results. The Appendix section contains all the proofs and auxiliary statements.

\subsection{Notations}
Denote by $\mathcal{S}(p)$ the $l = \frac{p(p+1)}{2}$ dimensional linear space of $p \times p$ symmetric real matrices. $\I_d$ stands for the $d \times d$ identity matrix. For a matrix $\M$ its Moore-Penrose generalized inverse is denoted by $\M^\dagger$. For any two matrices $\M$ and $\P$ we denote by $\M\otimes\P$ their tensor (Kronecker) product. $\sigma_1(\M) \geqslant \dots \geqslant \sigma_r(\M) \geqslant 0$ stand for the singular values of a rectangular matrix $\M$ of rank not greater than $r$. If $\M \in \mathcal{S}(p)$, we denote its eigenvalues by $\lambda_1(\M) \geqslant \dots \lambda_p(\M)$. $\norm{\cdot}_F$ denotes the Frobenius, and $\norm{\cdot}_2$ - the spectral norms of matrices. For any symmetric matrix $\S$, $\s = \vech{\S}$ is a vector obtained by stacking the columns of the lower triangular part of $\S$ into a single column. In addition, given an $l$ dimensional column vector $\m$ we denote by $\mat{\m}$ the inverse operator constructing a $p \times p$ symmetric matrix such that $\vech{\mat{\m}}=\m$. Due to this natural linear bijection below we often consider subsets of $\mathcal{S}(p)$ as subsets of $\mathbb{R}^l$. In addition, let $\vec{\S}$ be a $p^2$ dimensional vector obtained by stacking the columns of $\S$, and denote by $\mathcal{I}$ its indices corresponding to the related elements of $\vech{\S}$.

\section*{Acknowledgment}
The authors are grateful to Raj Rao Nadakuditi for his useful remarks and important suggestions, which helped us to significantly improve the paper.

\section{Problem Formulation and Examples}
\label{pr_f}
Consider the heterogeneous measurements model, namely, assume we are given $K\geqslant l = \frac{p(p+1)}{2}$ groups of real $p$ dimensional normal random vectors
\begin{equation}
\x_1^1,\dots,\x_1^n,\;\;\dots,\;\; \x_k^1,\dots,\x_k^n,\;\;\dots,\;\; \x_K^1,\dots,\x_K^n.
\label{msts}
\end{equation}
with $n$ i.i.d. (independent and identically distributed) samples inside each group and covariances
\begin{equation*}
\Q_k = \mathbb{E}[\x_k^1\x_k^{1T}],\;\; k=1,\dots,K.
\end{equation*}
We assume that 
\begin{equation}
\Q_k \in \mathcal{P}(p),\;\; k=1,\dots,K,
\label{q_def}
\end{equation}
belong to an $r$ dimensional affine subspace of $\mathcal{S}(p)$. Our main goal is to estimate this subspace and use it to improve the covariance matrices estimation. As a matter of application we will assume that $r$ is known or not known in advance. In the latter case we will also explain how to determine it from the data.

Let us list the most common affine covariance constraints naturally appearing in typical signal processing applications.
\begin{itemize}
[leftmargin=*]
\item {\bf{Diagonal}}:
The simplest example of a structure is given by diagonal matrices. The covariance matrix is diagonal when the noise variates are independent, or can be assumed so with great precision. In this case the low dimensional subspace containing the structured matrices is $r=p$ dimensional.

\item {\bf{Banded}}: In a similar manner it is often reasonable to assume non-neighboring variates of the sampled vectors to be uncorrelated. Claiming that $i$-th element of the random vector is uncorrelated with the $h$-th if $|i-h|>b$ leads to $b$-banded covariance structure. The subspace of symmetric $b$-banded matrices constitutes an $r=\frac{(2p-b)(b+1)}{2}$ dimensional subspace inside $\mathcal{S}(p)$. Banded covariance matrices often naturally appear in time-varying moving average models or in spatially distributed networks, \cite{bickel2008regularized, abramovich2007time, kavcic2000matrices, asif2005block}.

\item {\bf{Circulant}}:
\label{circ_def}
The next common type of structured covariance matrices are symmetric circulant matrices, defined as
\begin{equation}
\M =
 \begin{pmatrix}
  m_1 & m_2 & m_3 & \dots & m_p \\
  m_p & m_1 & m_2 & \dots & m_{p-1}\\
  \vdots & \vdots & \vdots & \ddots & \vdots \\
  m_2 & m_3 & m_4 & \dots & m_1
  \end{pmatrix},
\label{circ_struct}
\end{equation}
with the natural symmetry conditions such as $m_p = m_2,$ etc. Symmetric circulant matrices belong to an $r=p/2$ dimensional subspace if $p$ is even and $(p+1)/2$ if it is odd. Such matrices are typically used as approximations to Toeplitz matrices \cite{pollock2002circulant} which are associated with signals that obey periodic stochastic properties for example the yearly variation of temperature in a particular location. A special case of such processes are the classical stationary processes, which are ubiquitous in engineering, \cite{dembo1989embedding, cai2013optimal}.

\item {\bf{Toeplitz}}: A natural generalization of circulant are Toeplitz matrices. The covariance matrix appears to possess Toeplitz structure whenever the correlation between the $i$-th and the $h$-th components depend only on the the difference $|i-h|$. The dimension of a subspace of Toeplitz symmetric matrices is $r=p$. The two classical models for spectrum estimation utilizing the Toeplitz structures are the moving average (MA) and the autoregressive (AR) processes, \cite{wiesel2013time,abramovich2007time}. Interestingly, the finite MA($a$) process can be easily shown to be equivalent to $a$-banded Toeplitz covariance model, having the dimension $r=a+1$.

\item {\bf{Proper Complex}}:
Many physical processes can be conveniently described in terms of complex signals. For example, the most frequently appearing model of complex Gaussian noise is the circularly symmetric one \cite{kay1993fundamentals}. Such noise is completely characterized by its mean and rotation invariant hermitian covariance matrix $\Q^C$. Denote centered proper complex distributions as
\begin{equation*}
\x \sim \mathcal{CN}(\bm{0},\Q^C).
\end{equation*}
The real representation of the covariance reads as
\begin{equation}
\Q^R = \frac{1}{2}\begin{pmatrix}
  \operatorname{Re}(\Q^C) & -\operatorname{Im}(\Q^C) \\
  \operatorname{Im}(\Q^C) & \operatorname{Re}(\Q^C)
  \end{pmatrix}.
\label{prop_c_s}
\end{equation}
We see that $\Q^R$ possesses a simple linear $r=p^2/4$ dimensional structure, where $p$ is the dimension of $\Q^R$, which is always even. An analogous reasoning applies to proper quaternion covariances \cite{sloin2014proper}.
\end{itemize}
In the following it will be convenient to use a single matrix notation for the multiple linearly structured matrices. Set
\begin{align}
\q_k &= \vech{\Q_k}, k=1,\dots,K,\\
\Y &= [\q_1,\dots,\q_K],
\end{align}
Using these notation, the prior subspace knowledge is equivalent to a low-rank constraint
\begin{equation}
\Y = \U\Z,
\label{new_param}
\end{equation}
where $\U \in \mathbb{R}^{l \times r}$ and $\Z = [\z_1,\dots,\z_K] \in \mathbb{R}^{r \times K}$. Essentially our problem reduces to estimation of $\Y$ given $K$ groups of i.i.d. measurements $\{\x_k^i\}_{k,i=1}^{K,n}$ coming from the corresponding centered distributions with covariances $\Q_k,k=1,\dots,K$.

\section{Lower Performance Bounds}
\label{mse_b}
Before addressing possible solutions for the above joint covariance estimation problem, it is instructive to examine the inherent performance bounds. In this section we assume the rank $r$ is known and the samples are normally distributed. To obtain such performance bounds we use the Cramer-Rao Bound ($\mathbf{CRB}$) to lower bound the Mean Squared Error ($\mathbf{MSE}$) of any unbiased estimator $\widehat{\Y}$ of $\Y$, defined as
\begin{equation}
\mathbf{MSE} = \mathbb{E}\[\norm{\widehat{\Y} - \Y}_F^2\].
\end{equation}

The $\mathbf{MSE}$ is bounded from below by the trace of the corresponding $\mathbf{CRB}$ matrix. To compute the $\mathbf{CRB}$, for each $i$ we stack the measurements $\x_k^i$ from (\ref{msts}) into a single vector
\begin{equation}
\x^i  = 
\begin{pmatrix}
 \x_1^i \\
 \vdots \\
 \x_K^i
\end{pmatrix}
\sim \mathcal{N}(\bm{0},\Q_{\text{ext}}),\; i=1,\dots,n,
\label{ext_d}
\end{equation}
where the extended covariance is given by
\begin{multline}
\Q_{\text{ext}}(\U,\Z) = \diag{\Q_1,\dots,\Q_K} \\
= \diag{\mat{\U\z_1},\dots,\mat{\U\z_K}}.
\label{param_ex}
\end{multline}
Here the operator $\diag{\Q_1,\dots,\Q_K}$ returns a block-diagonal matrix of size $pK \times pK$ with $\Q_k$-s as its diagonal blocks. The Jacobian matrix of (\ref{ext_d}) parametrized as in (\ref{param_ex}) reads as
\begin{align}
&\J = \frac{\partial\Q_{\text{ext}}}{\partial (\U,\Z)} =
 \begin{pmatrix}
  \frac{\partial \q_1}{\partial \U} & \frac{\partial \q_1}{\partial \z_1} & 0 & \dots & 0 \\
  \frac{\partial \q_2}{\partial \U} & 0 & \frac{\partial \q_2}{\partial \z_2} & \dots & 0 \\
  \vdots & \vdots & \vdots & \ddots & \vdots \\
  \frac{\partial \q_K}{\partial \U} & 0 & 0 & \dots & \frac{\partial \q_K}{\partial \z_K} \\
  \end{pmatrix} \nonumber\\
&=
 \begin{pmatrix}
  \z_1^T \otimes \I_l & \U & 0 & \dots & 0 \\
  \z_2^T \otimes \I_l & 0 & \U & \dots & 0 \\
  \vdots & \vdots & \vdots & \ddots & \vdots \\
  \z_K^T \otimes \I_l & 0 & 0 & \dots & \U \\
  \end{pmatrix}
\in \mathbb{R}^{lK \times (lr+Kr)},
\end{align}
where we have used the following notation:
\begin{equation}
\frac{\partial \q_k}{\partial \U} =
\[\frac{\partial \q_k}{\partial \u_1} \; \frac{\partial \q_k}{\partial \u_2} \; \dots \; \frac{\partial \q_k}{\partial \u_r}\],
\end{equation}
and the formulas
\begin{equation}
\frac{\partial \q_k}{\partial \u_j} = \frac{\partial \U\z_k}{\partial \u_j} = z^j_k\I_l,\quad
\frac{\partial \q_k}{\partial \z_k} = \frac{\partial \U\z_k}{\partial \z_k} = \U.
\end{equation}
\begin{lemma}
\label{rank_lem}
\begin{equation}
\rank(\J) = lr + Kr - r^2
\end{equation}
\end{lemma}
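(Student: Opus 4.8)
The plan is to obtain $\rank(\J)$ by computing the dimension of its kernel and subtracting it from the number of columns $lr+Kr$. Since $\J$ is the Jacobian of the factorization map $(\U,\Z)\mapsto(\U\z_1,\dots,\U\z_K)$, its kernel should consist precisely of the infinitesimal reparametrizations that fix every product $\U\z_k$, i.e. the tangent directions of the $\mathrm{GL}(r)$ ambiguity $\U\Z=(\U\T)(\T^{-1}\Z)$; I therefore expect $\dim\Null{\J}=r^2$, which immediately gives $\rank(\J)=lr+Kr-r^2$.

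First I would make the kernel equation explicit. A vector in the domain is a pair $(\Delta\U,\D)$ with $\Delta\U\in\mathbb{R}^{l\times r}$ the (column-stacked) perturbation of $\U$ and $\D=[\d_1,\dots,\d_K]\in\mathbb{R}^{r\times K}$. Using the identity $(\z_k^T\otimes\I_l)\vec{\Delta\U}=\Delta\U\z_k$, the $k$-th block row of $\J$ acting on this pair equals $\Delta\U\z_k+\U\d_k$. Hence $(\Delta\U,\D)\in\Null{\J}$ if and only if $\Delta\U\z_k+\U\d_k=\0$ for every $k$, which I would collect into the single matrix equation $\Delta\U\Z+\U\D=\0$.

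Next I would display an $r^2$-dimensional family of solutions: for every $\T\in\mathbb{R}^{r\times r}$ the pair $\Delta\U=\U\T$, $\D=-\T\Z$ satisfies $\U\T\Z-\U\T\Z=\0$, and when $\U$ has full column rank the map $\T\mapsto(\U\T,-\T\Z)$ is injective, so this family is an $r^2$-dimensional subspace of $\Null{\J}$. The crux is then to prove that there are no further kernel elements, and here I would invoke the full-rank hypotheses $\rank\U=r$ and $\rank\Z=r$ (both hold whenever the $\q_k$ genuinely span the $r$-dimensional subspace, since $r=\rank\Y$ forces them). Letting $\P=\U\U^\dagger$ be the orthogonal projector onto $\col{\U}$ and applying $\I-\P$ on the left of $\Delta\U\Z=-\U\D$ annihilates the right-hand side, so $(\I-\P)\Delta\U\,\Z=\0$; since $\Z\Z^T$ is invertible this forces $(\I-\P)\Delta\U=\0$, i.e. $\Delta\U=\U\T$ with $\T=\U^\dagger\Delta\U$. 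Substituting back and cancelling $\U$ on the left (full column rank) gives $\D=-\T\Z$, so every kernel element lies in the gauge family. Thus $\dim\Null{\J}=r^2$ and $\rank(\J)=lr+Kr-r^2$.

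I expect the genuine obstacle to be this last step: one must be careful about the implicit full-rank assumptions on $\U$ and $\Z$ and verify that the projection-and-cancellation argument really leaves no residual solutions. The first two steps are routine $\vvec$-identity bookkeeping together with writing down the well-known factorization ambiguity.
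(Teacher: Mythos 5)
Your proof is correct, but it takes a genuinely different route from the paper's. The paper computes the rank constructively: it completes $\U$ to an orthonormal basis of $\mathbb{R}^l$, left-multiplies $\J$ by $\I_K\otimes\widehat{\U}$ to turn each $\U$ block into $\I_{l\times r}$, and then uses column operations to zero out the first $r$ rows of the Kronecker blocks, after which the rank is read off as $r(l-r)+Kr$ by inspecting the reduced column blocks. You instead compute $\dim\Null{\J}$ directly, identifying the kernel equation $\Delta\U\,\Z+\U\D=\0$ and showing via the projector $\I-\U\U^\dagger$ and the invertibility of $\Z\Z^T$ that every solution is of the gauge form $(\U\T,-\T\Z)$. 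Your argument is more conceptual: it exhibits the null space exactly as the tangent space of the $\mathrm{GL}(r)$ reparametrization ambiguity, which dovetails with the paper's own remark immediately after the lemma that $(\U\A,\A^{-1}\Z)$ parametrizes the same model, and it makes explicit the full-rank hypotheses $\rank{\U}=\rank{\Z}=r$ that the paper's reduction uses only implicitly (orthonormal completion of $\U$; independence of the reduced column blocks, which silently requires $\Z$ to have full row rank). The paper's approach, in exchange, is purely elementary Gaussian elimination and yields the rank without appealing to pseudoinverses. Both are valid; no gap in yours.
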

\begin{proof}
The proof can be found in Appendix \ref{ap_1}.
\end{proof}

This lemma implies that $\J$ is rank deficient, as
\begin{equation}
\rank(\J) = lr + Kr - r^2 \leqslant \min[lK, lr+Kr],
\end{equation}
reflecting the fact that the parametrization of $\Q_{\text{ext}}$ or $\Y$ by the pair $(\U,\Z)$ is unidentifiable. Indeed for any invertible matrix $\A$, the pair $(\U\A, \A^{-1}\Z)$ fits as good. Due to this ambiguity the matrix $\mathbf{FIM}(\U,\Z)$ is singular and in order to compute the $\mathbf{CRB}$ we use the Moore-Penrose pseudo-inverse of $\mathbf{FIM}(\U,\Z)$ instead of inverse, as justified by \cite{li2012interpretation}. Given $n$ i.i.d. samples $\x^i,i=1,\dots,n$, we obtain
\begin{equation}
\mathbf{CRB} = \frac{1}{n}\J\mathbf{FIM}(\U,\Z)^\dagger\J^T.
\end{equation}
For the Gaussian population the matrix $\mathbf{FIM}(\U,\Z)$ is given by
\begin{equation}
\mathbf{FIM}(\U,\Z) = \frac{1}{2}\J^T \diag{\[\Q_k^{-1} \otimes \Q_k^{-1}\]_{\mathcal{I},\mathcal{I}}}\J,
\end{equation}
where $\[\M\]_{\mathcal{I},\mathcal{I}}$ is the square submatrix of $\M$ corresponding to the subset of indices from $\mathcal{I}$. The bound on the $\mathbf{MSE}$ is therefore given by
\begin{align}
&\mathbf{MSE} \geqslant \Tr{\mathbf{CRB}} = \frac{1}{n}\Tr{\mathbf{FIM}(\U,\Z)^\dagger \J^T\J} \nonumber\\
&= \frac{2}{n}\Tr{\[\J^T \diag{\[\Q_k^{-1} \otimes \Q_k^{-1}\]_{\mathcal{I},\mathcal{I}}}\J\]^\dagger\J^T\J}. 
\label{crb_e}
\end{align}
Denote
\begin{equation}
\underline\lambda = \min_k\lambda_p(\Q_k),\quad \overline\lambda = \max_k \lambda_1(\Q_k).
\end{equation}
To get more insight on (\ref{crb_e}) we bound it from below
\begin{align}
\mathbf{MSE} &\geqslant \frac{2\underline\lambda^2}{n}\Tr{\[\J^T\J\]^\dagger \J^T\J} \nonumber \\
&= \frac{2\underline\lambda^2}{n}\rank(\J) = \frac{2\underline\lambda^2}{n}(lr + Kr - r^2). 
\label{crb_b}
\end{align}
The dependence on the model parameters here is similar to that obtained by \cite{tang2011lower} for the 
problem of low-rank matrix reconstruction. An important quantity is the marginal $\mathbf{MSE}$ per one matrix $\Q_k$, which is proportional to
\begin{equation}
\frac{\mathbf{MSE}}{K} \sim \frac{lr -r^2}{Kn}+ \frac{r}{n}.
\label{marg_mse}
\end{equation}

\section{TSVD Algorithm}
\label{jce_alg}
\subsection{The Basic Algorithm}
In this section we present our algorithm for recovery of the true underlying covariances $\Q_1,\dots,\Q_K$. We make use of the representation (\ref{new_param}) of $\Y$ with a small change in notation consisting in separation of the columns mean
\begin{equation}
\Y = \U\Z + \u_0\cdot[1,\dots,1].
\end{equation}
This is done in order to improve the performance of the proposed PCA-based algorithm. Consider the SCM of the $k$-th group of measurements
\begin{equation}
\S_k = \frac{1}{n} \sum_{i=1}^n \x_i^k\x_i^{kT},
\label{scm_def}
\end{equation}
and let
\begin{equation}
\s_k = \vech{\S_k}.
\end{equation}
Denote by
\begin{equation}
\S = [\s_1,\dots,\s_K],
\end{equation}
the measurement matrix, and compute the average of the columns
\begin{equation}
\widehat{\u}_0 = \frac{1}{K}\sum_{k=1}^K \s_k.
\end{equation}
Consider the matrix
\begin{equation}
\S' = [\s_1-\widehat{\u}_0,\dots,\s_K-\widehat{\u}_0],
\end{equation}
The SVD of $\S'$ reads as
\begin{equation}
\S' = \widehat{\U}  \begin{pmatrix}
\widehat{\bm\Sigma} & 0 \\
0 & \widehat{\bm\Sigma}_n
  \end{pmatrix} \widehat{\W}^T = [\widehat{\U}_1 \widehat{\U}_2]  \begin{pmatrix}
\widehat{\bm\Sigma} & 0 \\
0 & \widehat{\bm\Sigma}_n
  \end{pmatrix} [\widehat{\W}_1 \widehat{\W}_2]^T,
\label{wt_r}
\end{equation}
where the singular values are sorted in the decreasing order and $\widehat{\bm\Sigma} \in \mathbb{R}^{r\times r}$. We propose to use the matrix
\begin{equation}
\widehat{\Y}' = \widehat{\U}_1\widehat{\bm\Sigma}\widehat{\W}_1^T,
\end{equation}
as an estimator of
\begin{equation}
\Y' = \U\Z = [\U_1 \U_2]  \begin{pmatrix}
\bm\Sigma & 0 \\
0 & 0
  \end{pmatrix} [\W_1 \W_2]^T.
\end{equation}
This approach is based on Eckart-Young theorem and we refer to it as Truncated SVD (TSVD) method \cite{stewart1993early}. Finally, for the estimator of $\Y$ we have
\begin{equation}
\widehat{\Y} = \widehat{\Y}' + \widehat{\u}_0\cdot[1,\dots,1].
\end{equation}

\subsection{How to Choose the Rank?}
In real world settings the true rank $r$ of the structure subspace is rarely known in advance and one needs to estimate it from the data before applying the TSVD technique. It is instructive to think about rank estimation, followed by TSVD, simply as thresholding of the data singular values. A large variety of thresholding techniques exist, e.g. hard thresholding, see \cite{donoho2013optimal} and references therein. Unfortunately, almost none of them can be applied in our scenario due to two main reasons. First, most of them require the noise to be independent of the signal, which is not the case in our setting. Second, most of the thresholding approaches rely on prior knowledge about the power or spectral characteristics of the noise, which are known a priori, measured from the secondary data or can be somehow estimated from the samples. In our problem prior information is unavailable and such estimations can not be performed. Instead, we propose a different approach utilizing the fact that the noisy measurements come from Wishart populations.

Consider the expected signal power of $\S_k$
\begin{equation}
\mathbb{E}\norm{\S_k}_F^2 = \frac{n+1}{n}\norm{\Q_k}_F^2 + \frac{1}{n}(\Tr{\Q_k})^2.
\end{equation}
For $n$ large enough
\begin{equation}
\mathbb{E}\norm{\S_k}_F^2 \approx \norm{\Q_k}_F^2\(1+\frac{c}{\rho(\Q_k)}\),
\label{np}
\end{equation}
where $c = \frac{p}{n}$ and
\begin{equation}
\rho(\Q_k) = \frac{p\norm{\Q_k}_F^2}{(\Tr{\Q_k})^2} = \cos^{-2}\(\angle \I,\Q_k\)
\label{q_n}
\end{equation}
is the spherecity coefficient, \cite{ledoit2002some}, measuring how close is $\Q_k$ to the identity matrix. Now (\ref{np}) implies that the unknown squared norm of the true covariance $\norm{\Q_k}_F^2$ is given by
\begin{equation}
\norm{\Q_k}_F^2 = \mathbb{E}\norm{\S_k}_F^2\(1+\frac{c}{\rho(\Q_k)}\)^{-1}.
\end{equation}
We use the following result to estimate the unknown $\rho(\Q_k)$.
\begin{lemma}\cite{ledoit2002some}
Let $p$ and $n$ both tend to infinity in such a way that $\frac{p}{n} \to c$, then
\begin{equation}
\rho(\S_k) \to \rho(\Q_k) +c.
\end{equation}
\end{lemma}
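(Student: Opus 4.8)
The plan is to express the sphericity coefficient as a ratio of spectral traces and to treat the numerator and denominator separately, reducing the statement to a concentration argument. Fixing the group index $k$ and writing $\x_i$ for $\x_i^k$, observe that
\begin{equation}
\rho(\S_k) = \frac{p\,\Tr{\S_k^2}}{\Tr{\S_k}^2},
\end{equation}
so it suffices to track the two scalar statistics $a_n = \Tr{\S_k}$ and $b_n = \Tr{\S_k^2} = \norm{\S_k}_F^2$. My strategy is to show that, after the natural $1/p$ normalization, both $a_n$ and $b_n$ concentrate around deterministic limits, and then to pass to the ratio by the continuous mapping theorem.

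First I would dispose of the expectations. Since $\mathbb{E}[\S_k] = \Q_k$, we immediately get $\mathbb{E}[a_n] = \Tr{\Q_k}$. For the numerator I would invoke the Wishart second-moment identity already recorded above, $\mathbb{E}\norm{\S_k}_F^2 = \frac{n+1}{n}\norm{\Q_k}_F^2 + \frac{1}{n}\Tr{\Q_k}^2$, which follows from Isserlis' (Wick's) theorem applied to the quartic form $\frac{1}{n^2}\sum_{i,j}(\x_i^T\x_j)^2$ by separating the diagonal terms $i=j$ from the off-diagonal independent pairs. Combining these and dividing through by $\Tr{\Q_k}^2$ yields
\begin{equation}
\frac{p\,\mathbb{E}[b_n]}{\mathbb{E}[a_n]^2} = \frac{n+1}{n}\,\rho(\Q_k) + \frac{p}{n} \longrightarrow \rho(\Q_k) + c,
\end{equation}
so the \emph{ratio of the expectations} already carries the correct limit; the whole content of the lemma is therefore that the random ratio concentrates on this deterministic value.

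The concentration step is the heart of the proof. For the denominator, $a_n = \frac{1}{n}\sum_{i=1}^n \norm{\x_i}^2$ is an empirical average of i.i.d. quadratic forms, and a direct Gaussian computation gives $\var(a_n/p) = \frac{2\,\Tr{\Q_k^2}}{np^2} = O\!\left(1/(np)\right) \to 0$ under a uniform bound on the normalized spectral moments of $\Q_k$, so that $a_n/\mathbb{E}[a_n] \to 1$ in probability. For the numerator I would bound $\var(b_n/p)$ by a fourth-moment (eighth-order Gaussian) computation: expanding $\mathbb{E}[\Tr{\S_k^2}^2]$ through Wick's theorem, the leading $O(p^2)$ contributions cancel against $\mathbb{E}[b_n]^2$, leaving a remainder of order $o(p^2)$, whence $\var(b_n/p) \to 0$ and $b_n/\mathbb{E}[b_n] \to 1$ in probability.

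This variance estimate is the main obstacle: controlling it rigorously requires the regularity assumptions of the cited framework, namely convergence of the empirical spectral distribution of $\Q_k$ together with boundedness of its higher normalized moments, so that the cross terms of the Wick expansion stay subleading and a few large eigenvalues of $\Q_k$ cannot inflate the fluctuations. Granting these, the final step is routine: because $\Q_k$ is nondegenerate the limiting denominator $\left(\Tr{\Q_k}/p\right)^2$ is bounded away from zero, so the map $(a,b) \mapsto p\,b/a^2$ is continuous at the limit point, and Slutsky's theorem combined with the two concentration statements yields $\rho(\S_k) \to \rho(\Q_k) + c$ in probability.
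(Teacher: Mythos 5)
The paper offers no proof of this lemma at all --- it is imported verbatim from \cite{ledoit2002some} --- so there is nothing internal to compare against; your sketch is in effect a reconstruction of the argument in that reference, and it follows the same route Ledoit and Wolf take: compute the exact first moments of $\Tr{\S_k}$ and $\Tr{\S_k^2}$, observe that the ratio of expectations already equals $\frac{n+1}{n}\rho(\Q_k)+\frac{p}{n}$, and then upgrade to convergence of the random ratio by showing the normalized statistics have vanishing variance under their asymptotic framework (convergence and boundedness of the normalized spectral moments of $\Q_k$, plus $\Tr{\Q_k}/p$ bounded away from zero so the ratio map is continuous at the limit). Your moment identities check out: $\var\!\left(\Tr{\S_k}/p\right)=2\Tr{\Q_k^2}/(np^2)$ and the Wishart identity $\mathbb{E}\norm{\S_k}_F^2=\frac{n+1}{n}\norm{\Q_k}_F^2+\frac{1}{n}(\Tr{\Q_k})^2$ both follow from Wick's theorem as you say. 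The one place where your write-up is a placeholder rather than a proof is the claim that $\var\!\left(\Tr{\S_k^2}/p\right)\to 0$: the eighth-order Gaussian moment expansion with the $O(p^2)$ cancellation is precisely the technical content of the Ledoit--Wolf lemma, and you assert its conclusion rather than carry it out. You identify this honestly as the main obstacle, so I would call the proposal a correct outline with the hard step delegated to the cited source --- which is also exactly what the paper does.
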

Based on this lemma, use $\widehat\rho(\Q_k) = \rho(\S_k) - c$ as an estimate of $\rho(\Q_k)$ and $\norm{\S_k}$ as an estimate of $\mathbb{E}\norm{\S_k}_F^2$ to obtain
\begin{equation}
\norm{\Q_k}_F^2 \approx \norm{\S_k}_F^2 \(1 - \frac{c}{\rho(\S_k)}\) = \norm{\S_k}_F^2 - \frac{(\Tr{\S_k})^2}{n}.
\end{equation}
The ratio of the desired signal's power $\norm{\Y}_F^2$ to the power of measurements, $\mathbb{E}\norm{\S}_F^2$ can now be estimated by
\begin{equation}
\alpha(\S_1,\dots,\S_K) = \frac{\norm{\Y}_F^2}{\norm{\S}_F^2} = 1- \frac{1}{n}\frac{\sum_{k=1}^K (\Tr{\S_k})^2}{\sum_{k=1}^K \norm{\S_k}_F^2}.
\end{equation}
This derivation suggests a simple rule of thumb for thresholding the spectrum. As an estimate of the rank $\widehat{r}$ we take the number of largest singular values of $\S$ carrying the fraction $\alpha$ of the signal's energy,
\begin{equation}
\widehat{r} = \argmin_L \Bigg|\sum_{l=1}^L \sigma_l^2(\S) - \alpha\norm{\S}_F^2\Bigg|.
\label{rankc}
\end{equation}
Below we test this method of rank recovery in comparison to different approaches by numerical simulations.

\subsection{TSVD Upper Performance Bound}
\label{sec_tsvd}
In this section we provide the performance analysis of the proposed TSVD algorithm in the Gaussian scenario, assuming $r$ and $\u_0$ are known. We make the following natural assumption.
\begin{assm}
\label{ass}
The condition number of the true matrix parameter $\Y$ is bounded from above by
\begin{equation}
\frac{\sigma_1(\Y)}{\sigma_r(\Y)} \leqslant \kappa \sqrt{K},
\end{equation}
and the smallest singular value of $\Y$ is bounded from below by
\begin{equation}
\sigma_r(\Y) \geqslant \epsilon,
\end{equation}
where $\kappa$ and $\epsilon$ do not depend on $K,p,n$ or $r$.
\end{assm}
In addition, without loss of generality let $\u_0 = 0$.

\begin{theorem}
\label{g_b}
Under Assumption \ref{ass} with probability at least $1-2e^{-cp}$
\begin{equation}
\norm{\widehat{\Y}-\Y}_F^2 \leqslant C_1\overline\lambda^2(\kappa+\overline\lambda)^2\frac{r(K+C_2l)}{n},
\label{hpb}
\end{equation}
where $c, C_1$ and $C_2$ do not depend on $K,p,n$ or $r$.
\end{theorem}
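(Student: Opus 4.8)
The plan is to view the TSVD estimator as a spectral projection of the noisy data and to combine a purely deterministic perturbation bound with a high-probability bound on the spectral norm of the sampling noise. Write $\S = \Y + \E$, where $\E = [\vech{\S_1 - \Q_1},\dots,\vech{\S_K - \Q_K}]$ collects the vectorized deviations of the sample covariances from the true ones. Since $\u_0 = 0$ we have $\S' = \S$, and the rank-$r$ truncation satisfies $\widehat{\Y} = \widehat{\U}_1\widehat{\U}_1^T\S =: \P_L\S$, the projection of $\S$ onto its leading $r$-dimensional left singular subspace. Because the $\Q_k$ lie in an $r$-dimensional linear subspace, $\Y = \U\Z$ has rank at most $r$, and the lower bound $\sigma_r(\Y)\geqslant\epsilon$ of Assumption \ref{ass} guarantees it is exactly $r$, so $\sigma_{r+1}(\Y)=0$.

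First I would establish the deterministic bound $\norm{\widehat{\Y}-\Y}_F \leqslant 3\sqrt{r}\,\norm{\E}_2$. Decompose
\begin{equation*}
\widehat{\Y}-\Y = \P_L\S - \Y = -(\I-\P_L)\Y + \P_L\E .
\end{equation*}
The term $\P_L\E$ has rank at most $r$, so $\norm{\P_L\E}_F \leqslant \sqrt{r}\,\norm{\P_L\E}_2 \leqslant \sqrt{r}\,\norm{\E}_2$. For the first term, write $(\I-\P_L)\Y = (\I-\P_L)\S - (\I-\P_L)\E$ and use $\norm{(\I-\P_L)\S}_2 = \sigma_{r+1}(\S)$ together with Weyl's inequality $\sigma_{r+1}(\S)\leqslant \sigma_{r+1}(\Y) + \norm{\E}_2 = \norm{\E}_2$ to get $\norm{(\I-\P_L)\Y}_2 \leqslant 2\norm{\E}_2$; since $(\I-\P_L)\Y$ also has rank at most $r$, $\norm{(\I-\P_L)\Y}_F \leqslant 2\sqrt{r}\,\norm{\E}_2$. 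This step is gap-free; the role of Assumption \ref{ass} here is to certify that the leading $r$ singular directions of $\S$ are driven by the signal rather than the noise, and tracking the conditioning $\sigma_1(\Y)/\sigma_r(\Y)\leqslant\kappa\sqrt{K}$ and the scale $\overline\lambda$ through a Wedin-type refinement is what produces the constant form $\overline\lambda^2(\kappa+\overline\lambda)^2$ in the statement.

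The crux is the concentration bound $\norm{\E}_2^2 \leqslant C\,\overline\lambda^2(\kappa+\overline\lambda)^2(K+C_2 l)/n$, which I would obtain through $\norm{\E}_2^2 = \norm{\E^T\E}_2$ and a split of $\E^T\E\in\mathbb{R}^{K\times K}$ into diagonal and off-diagonal parts. The columns of $\E$ are independent, each a centered Wishart fluctuation with per-entry scale $\overline\lambda^2/n$. The diagonal entries $\norm{\vech{\S_k-\Q_k}}^2$ concentrate about $\mathbb{E}\norm{\S_k-\Q_k}_F^2 \sim l\,\overline\lambda^2/n$ by a Bernstein tail for quadratic forms in Gaussians. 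The off-diagonal part is a symmetric matrix with independent, mean-zero entries $\langle\S_j-\Q_j,\S_k-\Q_k\rangle$ of standard deviation $\sim \sqrt{l}\,\overline\lambda^2/n$, whose operator norm is controlled at level $\sim \overline\lambda^2\sqrt{lK}/n \leqslant \overline\lambda^2(l+K)/n$ by a Wigner-type bound. Equivalently one may run a single double $\varepsilon$-net over the spheres $S^{l-1}$ and $S^{K-1}$ and apply the Hanson--Wright inequality to each fixed pair of test vectors; the net cardinalities $e^{cl}$ and $e^{cK}$ are exactly what force the additive $K+l$ scaling, while the weakest concentration link -- the $p$-dimensional Wishart operator-norm event for an individual group -- yields the stated failure probability $2e^{-cp}$.

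Combining the two parts gives $\norm{\widehat{\Y}-\Y}_F^2 \leqslant 9r\,\norm{\E}_2^2 \leqslant C_1\overline\lambda^2(\kappa+\overline\lambda)^2\,r(K+C_2 l)/n$, which is (\ref{hpb}). The \emph{main obstacle} is the probabilistic step: unlike additive Gaussian noise, the entries of $\E$ are quadratic in the data and only sub-exponential, so the spectral-norm concentration must go through Bernstein/Hanson--Wright tails rather than standard sub-Gaussian matrix bounds. Care is needed to (i) recover the sharp additive $K+l$ dependence instead of the lossy $\norm{\E}_F^2 \sim Kl/n$, and (ii) keep the failure probability at $2e^{-cp}$ uniformly over all $K$ groups while correctly tracking the sub-exponential scale's dependence on $\overline\lambda$ and on the conditioning of $\Y$ supplied by Assumption \ref{ass}.
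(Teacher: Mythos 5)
Your deterministic step is correct and is in fact cleaner than the paper's: writing $\widehat{\Y}=\P_L\S$ with $\P_L=\widehat{\U}_1\widehat{\U}_1^T$ and splitting the error into $(\I-\P_L)\Y$ and $\P_L\E$, each of rank at most $r$, gives the gap-free bound $\norm{\widehat{\Y}-\Y}_F\leqslant 3\sqrt{r}\,\norm{\E}_2$ with no appeal to Wedin's theorem and no condition number. The paper instead splits $\widehat{\U}_1\widehat{\bm\Sigma}\widehat{\W}_1-\U_1\bm\Sigma\W_1$ into three terms controlled by Weyl's and Wedin's theorems, which is why the factor $\sigma_1(\Y)/\sigma_r(\Y)\leqslant\kappa\sqrt{K}$ enters its deterministic bound and hence the $(\kappa+\overline\lambda)^2$ in (\ref{hpb}).

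The genuine gap is in the concentration step, and neither route you sketch for $\norm{\E}_2^2\lesssim\overline\lambda^2(K+l)/n$ actually works. First, the off-diagonal part of $\E^T\E$ does \emph{not} have independent entries: $\langle\r_j,\r_k\rangle$ and $\langle\r_j,\r_m\rangle$ share the column $\r_j$, so a Wigner-type operator-norm bound for independent-entry matrices does not apply. Second, the double $\varepsilon$-net plus Hanson--Wright/Bernstein argument is lossy precisely because the marginals $\langle\r_k,\u\rangle$ are only subexponential with parameter $\psi\sim\overline\lambda/\sqrt{n}$: union-bounding over $e^{c(l+K)}$ net points forces the deviation level $t$ into the linear (subexponential) branch $t\gtrsim\psi\norm{\v}_\infty(l+K)$, and for net points near basis vectors this only yields $\norm{\E}_2\lesssim\overline\lambda(l+K)/n$, whose square exceeds the target $\overline\lambda^2(l+K)/n$ whenever $l+K>n$ --- exactly the regime of interest, since $K\geqslant l=p(p+1)/2$. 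The paper closes this gap by invoking Theorem~1 of Adamczak et al.\ (Theorem \ref{adam_th}) for matrices with independent subexponential columns, which requires verifying two nontrivial hypotheses: a uniform Orlicz-norm bound on the one-dimensional marginals, obtained by identifying the worst-case direction $\P_k\propto\Q_k^{-1}$ and reducing to a $\chi^2(np)$ tail, and a high-probability bound on $\max_k\norm{\r_k}$, obtained from the Wishart operator-norm concentration of Lemma \ref{gau_sb} (this is also where the $1-2e^{-cp}$ failure probability comes from). Without this theorem, or an equivalent truncation/chaining argument, your bound on $\norm{\E}_2$ is unproven. Note finally that the paper establishes the stronger rate $\norm{\R}_2^2\lesssim\overline\lambda^2(K+l)/(nK)$, which it must pair with its $\kappa\sqrt{K}$-inflated deterministic bound to reach (\ref{hpb}); your weaker claimed rate $(K+l)/n$ would indeed suffice when paired with your $3\sqrt{r}\norm{\E}_2$ bound, but it still needs the Adamczak-type machinery to be justified.
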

\begin{proof}
Denote by $\R$ the zero mean noise matrix
\begin{equation}
\R = \S - \Y.
\end{equation}
The proof consists of two stages. First, using Weyl's and Wedin's Theorems we establish a deterministic bound on $\norm{\widehat{\Y}-\Y}_F$ depending on $\norm{\R}_2, \sqrt{r}$ and $\kappa$. On the second stage we use concentration of measure results to achieve high-probability bound on the norm $\norm{\R}_2$, yielding the desired result.

\begin{lemma}(Weyl, Theorem 4.11 from \cite{stewart1990matrix})
\label{weyl_thm}
With the notations of Section \ref{jce_alg},
\begin{equation}
\max_{1\leqslant j\leqslant r}|\sigma_j(\widehat{\bm\Sigma}) - \sigma_j(\Y)| \leqslant \norm{\R}_2.
\end{equation}
\end{lemma}

\begin{lemma}(Wedin, Theorem 4.4 from \cite{stewart1990matrix})
With the notations of Section \ref{jce_alg},
\begin{equation}
\sigma_{\min}([\widehat{\U}^\perp]^T\U^\perp), \sigma_{\min}(\widehat{\U}^T\U) \geqslant \sqrt{1-\frac{\norm{\R}_2^2}{\sigma_r^2(\Y)}}.
\end{equation}
\end{lemma}

\begin{lemma}
\label{m_t}
When $r$ and $\u_0$ are known,
\begin{equation}
\norm{\widehat{\Y}-\Y}_F \leqslant \sqrt{r}\norm{\R}_2\(\sqrt{2}\frac{\sigma_1(\Y)+2+\norm{\R}_2}{\sigma_r(\Y)} + 1\).
\label{thm_in}
\end{equation}
\end{lemma}
\begin{proof}
Use the triangle inequality, Weyl's and Wedin's theorems to get
\begin{align}
&\norm{\widehat{\Y}-\Y}_F \leqslant \norm{\widehat{\U}_1\widehat{\bm\Sigma}\widehat{\W}_1 - \U_1\widehat{\bm\Sigma}\widehat{\W}_1}_F \nonumber\\
&+ \norm{\U_1\widehat{\bm\Sigma}\widehat{\W}_1 - \U_1\bm\Sigma\widehat{\W}_1}_F + \norm{\U_1\bm\Sigma\widehat{\W}_1 - \U_1\bm\Sigma\W_1}_F \nonumber\\
&\leqslant \norm{\widehat{\bm\Sigma}}_2\norm{\widehat{\U}_1 - \U_1}_F + \norm{\widehat{\bm\Sigma} - \bm\Sigma}_F + \sigma_1(\Y)\norm{\widehat{\W}_1 - \W_1}_F \nonumber\\
&\leqslant \(\norm{\widehat{\bm\Sigma}}_2+\sigma_1(\Y)\)\sqrt{2r}\frac{\norm{\R}_2}{\sigma_r(\Y)} + \norm{\widehat{\bm\Sigma} - \bm\Sigma}_F \nonumber \\
&\leqslant \sqrt{2r}\frac{1+\sigma_1(\Y)}{\sigma_r(\Y)}\norm{\R}_2 + \norm{\widehat{\bm\Sigma} - \bm\Sigma}_2\(\sqrt{r}+\frac{\sqrt{2r}}{\sigma_r(\Y)}\norm{\R}_2\) \nonumber\\
&\leqslant \sqrt{r}\norm{\R}_2\(\frac{\sqrt{2}}{\sigma_r(\Y)}\(\sigma_1(\Y)+1+ \norm{\R}_2\) + 1\).
\end{align}
\end{proof}
This result quantifies the intuition that the error bound should depend on the intrinsic dimension $r$ of the estimated subspace, rather than on the ambient dimension $l$. 

We now proceed to the second stage of the proof and develop a high-probability bound on the spectral norm of $\R$ in the following
\begin{lemma}
\label{r_lem}
with probability at least $1-2e^{-cp}$,
\begin{equation}
\norm{\R}_2 \leqslant \overline\lambda\(\sqrt{\frac{2}{n}} +  C_3\sqrt{\frac{l}{nK}}\),
\label{lle}
\end{equation}
where $C_3$ does not depend on the model parameters $K,p,n$ or $r$.
\end{lemma}
\begin{proof}
The proof can be found in the Appendix \ref{ap_2}.
\end{proof}

Plug (\ref{lle}) into (\ref{thm_in}) to obtain that with probability at least $1-2e^{-cp}$,
\begin{align}
&\norm{\widehat{\Y}-\Y}_F \leqslant \overline\lambda\sqrt{\frac{r}{n}}\(\sqrt{2} +  C_3\sqrt{\frac{l}{K}}\)\times \nonumber \\ &\(\sqrt{2}\frac{\sigma_1(\Y)}{\sigma_r(\Y)}+\frac{1}{\sigma_r(\Y)}+\frac{\overline\lambda}{\sigma_r(\Y)}\(\sqrt{\frac{2}{n}} +  C_3\sqrt{\frac{l}{nK}}\) + 1\) \nonumber\\
&\leqslant C_4\overline\lambda(\kappa+\overline\lambda)\sqrt{\frac{r}{n}}\(\sqrt{2K} +  C_3\overline\lambda\sqrt{l}\),
\end{align}
where $C_4$ does not depend on the model parameters and the last inequality is due to Assumption \ref{ass}. We finally obtain that with probability at least $1-2e^{-cp}$,
\begin{equation}
\norm{\widehat{\Y}-\Y}_F \leqslant C_1\overline\lambda(\kappa+\overline\lambda)\frac{\sqrt{r}(\sqrt{K}+C_2\overline\lambda\sqrt{l})}{\sqrt{n}},
\end{equation}
and the statement follows.
\end{proof}

The obtained performance bound (\ref{hpb}) suggests that the error is bounded with high probability by a product of a linear combination of $K$ and $l$ with $r$, rather than the ambient dimension $l$ of the space. Compared to the bound on the $\mathbf{MSE}$, (\ref{crb_b}), it shows that the proposed TSVD algorithm exhibits near-optimal dependence on the model parameters $K,p,n,r$.

\section{Numerical Simulations}
\subsection{TSVD Performance in Toeplitz Model}
For our first experiment we took a Toeplitz setting with $p=10, \U_0 = \I_p$. The true covariances were positive definite matrices generated as $\Q_k = \U_0 + \sum_{j=1}^rz^j_k\D_j/\norm{\D_j}_F$, where $\D_j$ has ones on the $j$-th and $-j$-th subdiagonals and zeros otherwise, $r=p-1$ and $z_k^j$ were i.i.d. uniformly distributed over the interval $[-\frac{1}{2},\frac{1}{2}]$. Figure \ref{perf_n} shows the dependence of the $\mathbf{MSE}$ on $n$ when $K=l$. In the unknown $r$ case we took $\widehat\alpha$ power threshold for our TSVD algorithm, as defined in (\ref{rankc}). For comparison we also plot the $\mathbf{MSE}$-s of the SCM and its projection onto the known subspace structure and the true $\mathbf{CRB}$ bound given by (\ref{crb_e}). 

Figure \ref{thr_n} compares different thresholding techniques. As the benchmarks we took the Asymptotically Optimal Hard Thresholding (AOHT), proposed by M. Gavish and D.L. Donoho in \cite{donoho2013optimal} and the so-called ``elbow method''. The AOHT approach consists in hard thresholding the singular values of a noisy low rank matrix $\X$ at a fixed level of
\begin{equation}
\tau(\X) = \omega\(\frac{l}{K}\)\sigma_m(\X),
\end{equation}
independent of the true rank value. Here the function $\omega(\beta)$ is defined in \cite{donoho2013optimal} together with the numerical algorithm of its evaluation, and $\sigma_m(\X)$ is the median of the singular values of $\X$. As the authors demonstrate in \cite{donoho2013optimal}, this approach provides asymptotically optimal hard thresholding under specific assumptions on the noise (in particular, the noise column vectors are assumed i.i.d. white Gaussian). We applied the threshold $\tau(\X)$ to the original measurement $\S$ (AOHT-S in the figure), and its centered counterpart $\S'$ (AOHT-S-c in the figure). As the graph suggests, these two approaches give very close results and perform poorly, compared to our $\alpha$-thresholding, when the number of measurements $n$ in each group is relatively small. The ``elbow method'' consists in thresholding the spectrum of $\X$ at the level of the largest gap between the consecutive singular values of $\X$. This intuitive rule follows the well known observation that the eigenvalues corresponding to the signal and noise usually group into clusters separated by a significant gap. This observation also relies on specific properties of the noise which can be violated in our settings. For comparison, we plot the empirical $\mathbf{MSE}$s of the ``elbow method'' applied to $\S$ (Elbow-S in the figure) and $\S'$ (Elbow-S-c in the figure) correspondingly.

\begin{figure}
\centering
\includegraphics[width = 3.6in]{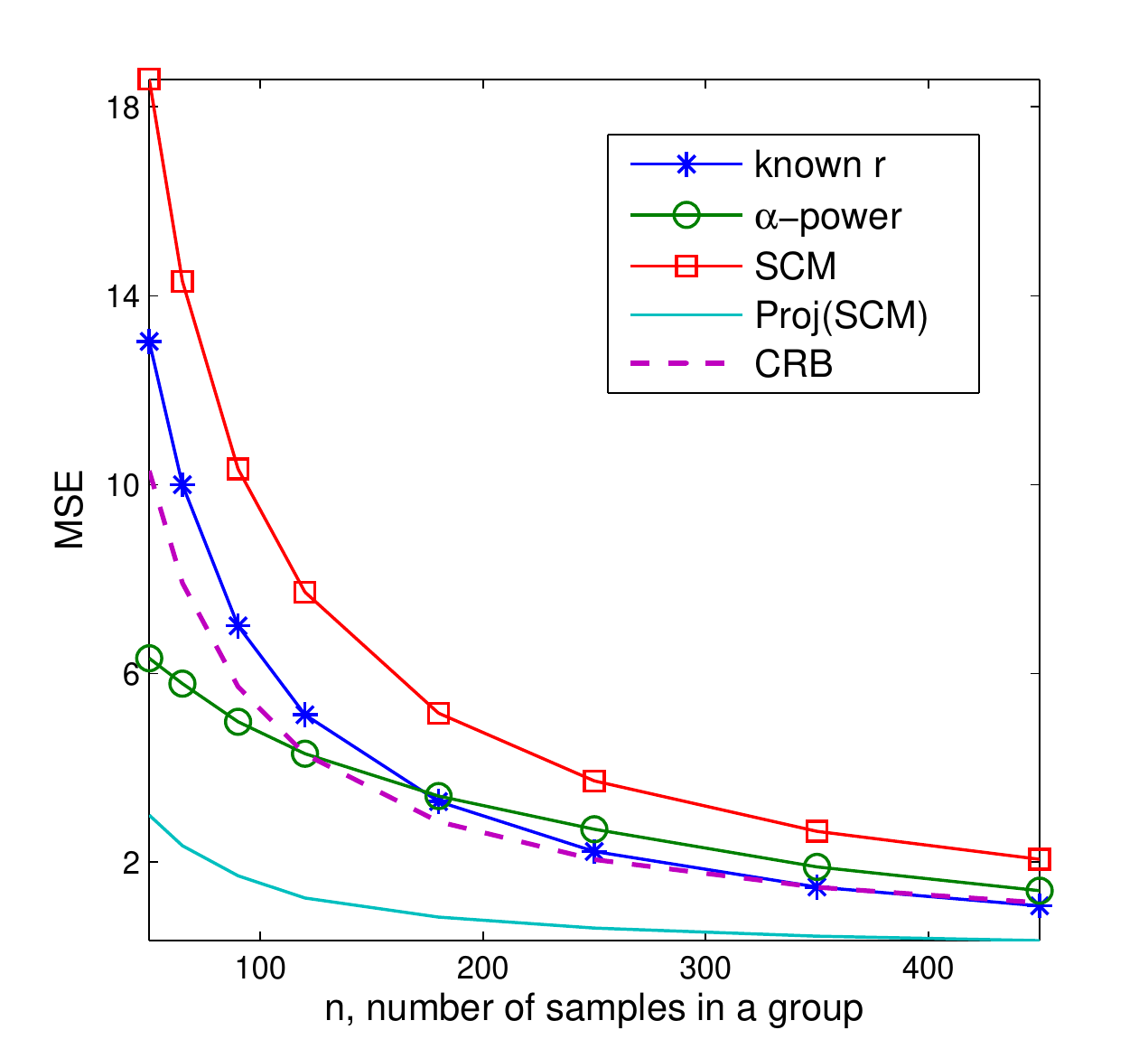}
\caption{TSVD algorithm performance.}
\label{perf_n}
\centering
\includegraphics[width = 3.6in]{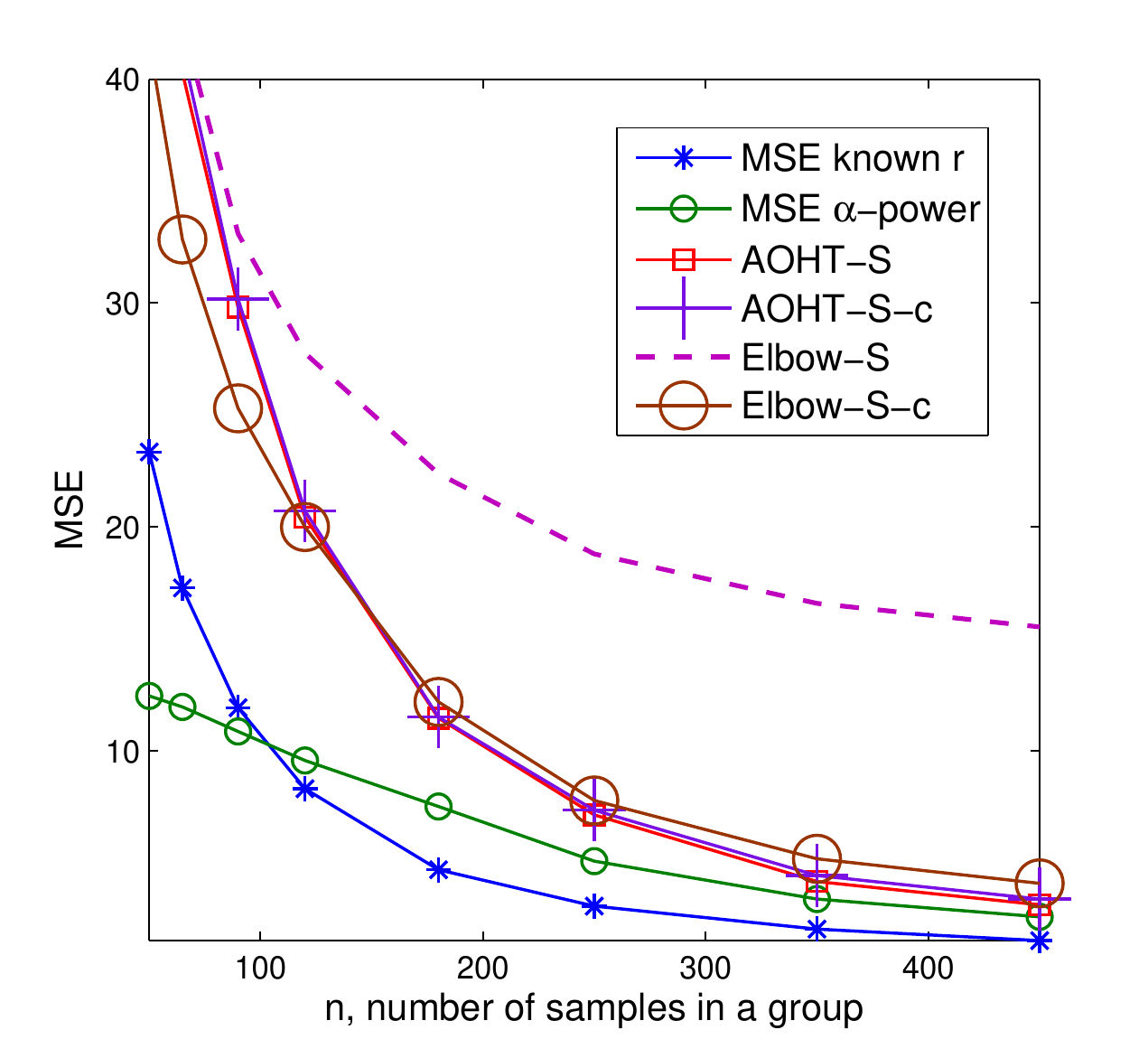}
\caption{TSVD algorithm different thresholding methods.}
\label{thr_n}
\end{figure}

In the third experiment we set $n=100$ fixed and explored the dependence of the $\mathbf{MSE}$ on the number of groups $K$ in the same setting as before. Figure \ref{perf_k} verifies that the marginal $\mathbf{MSE}$ depends on $K$ as predicted by formula (\ref{marg_mse}).
\begin{figure}
\centering
\includegraphics[width = 3.6in]{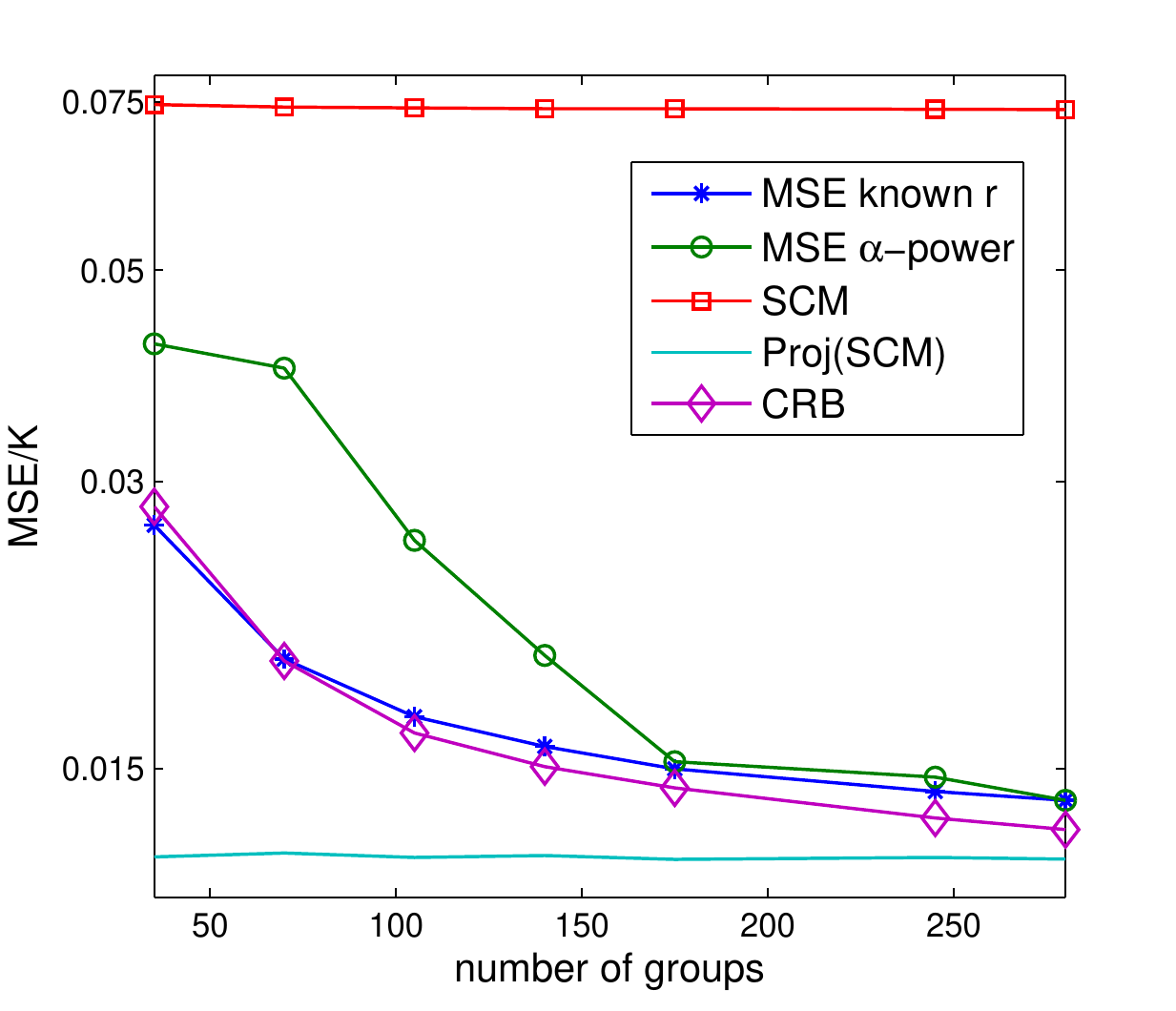}
\caption{Marginal TSVD algorithm performance, $n=100$.}
\label{perf_k}
\end{figure}

\subsection{TSVD Time-tracking}
For the second experiment we considered the problem of tracking a time-varying covariance in complex populations. We used the Data Generating Process (DGP) of Patton and Sheppard, \cite{patton2009evaluating}, which allows for dynamically changing covariances in the spirit of a multivariate GARCH-type model, \cite{bollerslev1988capital, hansson1998testing}. One of variations of this DGP suggests the following data model:
\begin{equation}
\x_t = \H_t^{1/2}\y_t, t=1,\dots Kn,
\end{equation}
where we assumed the generating data to be proper complex, $\y_t \sim \mathcal{CN}(\bm{0},\I)$ and defined the hermitian time-varying covariance $\H_t$ to change according to the law
\begin{align}
&\widehat{\H}_t = (1-\beta)\H_{t-1} + \beta \M_t\M_t^H, \\
&\H_t = \frac{\widehat{\H}_t}{\norm{\widehat{\H}_t}_F}, t=1,\dots Kn.
\end{align}
Here $\M_t$ are random $p\times p$ matrices with i.i.d. standard normally distributed entries, $\H_0$ is arbitrary positive-definite hermitian and $\beta \in [0,1]$.
\begin{figure}
\centering
\includegraphics[width = 3.6in]{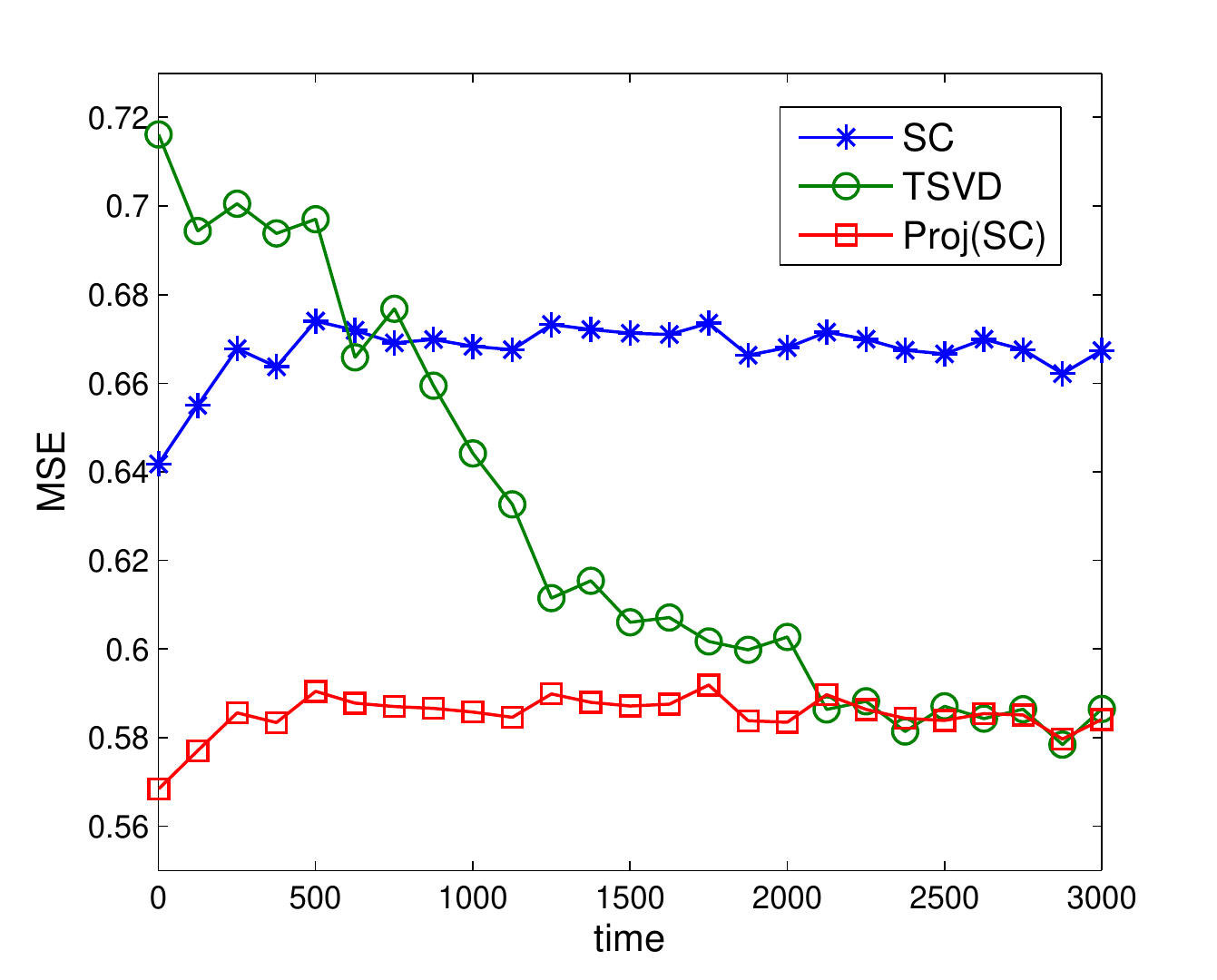}
\caption{TSVD algorithm learning the low-dimensional subspace with time.}
\label{sl30}
\end{figure}

The low-dimensional structure appearing in this setting is due to properness of the covariances (see (\ref{prop_c_s})). In order to explore it, the obtained complex data was represented as double-sized real measurements. Each $n$ clock ticks we formed the SCM $\S_{\frac{t}{n}}$ of the last $n$ measurements, where $t$ was the last time count. Then we concatenated the vector $\vech{\S_{\frac{t}{n}}}$ to the matrix $\Y_{\frac{t}{n}-1} \in \mathbb{R}^{l \times (\frac{t}{n}-1)}$ of growing size to obtain $\Y_{\frac{t}{n}}$ and applied our TSVD algorithm to it. Thus, our structure knowledge was updated every $n$ ticks, and we expected the error of the covariance estimation to decrease with time. We performed the experiment with $p=4, K=100, n=30, \beta=0.01$ and used $90\%$-power threshold to discover the underlying low-dimensional structure. Figure \ref{sl30} compares the temporal behavior of the $\mathbf{MSE}$-s of the SCM, TSVD applied to it and the projection of SCM on the subspace spanned by proper covariances. The $\mathbf{MSE}$-s were obtained by averaging the squared errors over $10000$ iterations.

\section{Conclusion}
In this paper we consider the problem of joint covariance estimation with linear structure, given heterogeneous measurements. The main challenge in this scenario is twofold. At first, the underlying structure is to be discovered, and then utilized to improve the covariance estimation. We propose a PCA-based algorithm in the space of matrices, solving the both tasks simultaneously. In addition, we analyze the performance bounds of this algorithm and show it is close to being optimal. Finally, we demonstrate its advantages through numerical simulations.

\appendices
\section{}
\label{ap_1}
\begin{proof}[Proof of Lemma \ref{rank_lem}]
Let us calculate the column rank of $\J$. Complete the columns of $\U$ to an orthonormal basis $\widehat{\U}$ of $\mathbb{R}^l$ and multiply $\J$ on the left by an invertible square matrix $\I_K \otimes\widehat{\U}$ to obtain
\begin{equation}
\widehat{\J} = \(\I_K \otimes\widehat{\U}\) \J = \begin{pmatrix}
  \z_1^T \otimes \widehat{\U} & \I_{l \times r} & 0 & \dots & 0 \\
  \z_2^T \otimes \widehat{\U} & 0 & \I_{l \times r} & \dots & 0 \\
  \vdots & \vdots & \vdots & \ddots & \vdots \\
  \z_K^T \otimes \widehat{\U} & 0 & 0 & \dots & \I_{l \times r} \\
  \end{pmatrix},
\end{equation}
where $\I_{l \times r}$ denotes the first $r$ columns of $\I_l$. Let us look at the first $l$ rows of $\widehat{\J}$
\begin{equation}
\widehat{\J}_1 = \begin{pmatrix}
  \z_1^T \otimes \widehat{\U} & \I_{l \times r} & 0 & \dots & 0
  \end{pmatrix}.
\end{equation}
By subtracting form the first $l$ columns necessary linear combinations of $\I_{l \times r}$ this matrix can be brought to the form
\begin{equation}
\widehat{\J}_1' = \begin{pmatrix}
  \z_1^T \otimes \widehat{\U}' & \I_{l \times r} & 0 & \dots & 0
  \end{pmatrix},
\end{equation}
where the first $r$ rows of $\widehat{\U}'$ are zero. Performing the same procedure for all the row submatrices $\widehat{\J}_k,\;k=1,\dots,K$ yields
\begin{equation}
\widehat{\J}' = \begin{pmatrix}
  \z_1^T \otimes \widehat{\U}' & \I_{l \times r} & 0 & \dots & 0 \\
  \z_2^T \otimes \widehat{\U}' & 0 & \I_{l \times r} & \dots & 0 \\
  \vdots & \vdots & \vdots & \ddots & \vdots \\
  \z_K^T \otimes \widehat{\U}' & 0 & 0 & \dots & \I_{l \times r} \\
  \end{pmatrix},
\end{equation}
of the same rank as $\widehat{\J}$. The first $l$ columns of $\widehat{\J}'$ read as
\begin{equation}
\widehat{\J}'^1 = \begin{pmatrix}
  z_{11}\widehat{\U}'\\
  z_{21}\widehat{\U}'\\
  \vdots\\
  z_{K1}\widehat{\U}'\\
  \end{pmatrix} = \begin{pmatrix}
  z_{11}\\
  z_{21}\\
  \vdots\\
  z_{K1}\\
  \end{pmatrix} \otimes \widehat{\U}',
\end{equation}
and therefore,
\begin{equation}
\rank(\widehat{\J}'^1) = \rank(\widehat{\U}') = l-r.
\end{equation}
Performing the same operation on the column submatrices $\widehat{\J}'^j,\;j=1,\dots,r$ gives
\begin{equation}
\rank(\J) = \rank(\widehat{\J}') = r(l-r) + Kr = lr + Kr - r^2.
\end{equation}
\end{proof}

\section{}
\label{ap_2}
The following definitions and basic lemmas from the theory of Orlicz spaces can be found in such classical references as \cite{rao1991theory}, or in a more convenient form in recent works such as \cite{van2013bernstein}.
\begin{definition}(Subexponential variable and its Orlicz norm)
Let $x$ be a nonnegative random variable such that
\begin{equation}
\mathbb{P}(x \geqslant \mu + t) \leqslant \left\{ 
\begin{array}{ll}
\exp\(-\frac{t^2}{2\sigma^2}\),\quad t \in \[0,\frac{\sigma^2}{b}\],\\
\exp\(-\frac{t}{2b}\),\quad t \in \(\frac{\sigma^2}{b},\infty\),
\end{array}
\right.
\end{equation}
for some $\mu, \sigma$ and $b$, then we say that $x$ is subexponential with parameters $(\sigma^2, b)$.
The Orlicz norm of $x$ is defined as
\begin{equation}
\norm{x}_{\psi_1} = \inf_{\eta > 0}\left\{\mathbb{E}\exp\(\frac{x}{\eta}\)\leqslant 2\right\}.
\end{equation}
\end{definition}
The intuition behind the subexponential norm basically says that if $\norm{x}_{\psi_1}$ is finite, the $\norm{x}_{\psi_1}$-scaled tails of $x$ are not heavier than that of a standard exponential variable. 

\begin{lemma}
\label{subexp_lem}
A random variable $x$ is subexponential with parameters $(\sigma^2,b)$ iff 
\begin{equation}
\mathbb{E}(\exp(\lambda x)) \leqslant \exp\(\frac{\sigma^2\lambda^2}{2}\),\quad\forall\; |\lambda|<\frac{1}{b}.
\end{equation}
\end{lemma}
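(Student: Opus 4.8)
The statement is the classical Bernstein-type equivalence between a two-piece subexponential tail and a Gaussian-type control of the moment generating function on the band $|\lambda|<1/b$. The plan is to prove the two implications separately: the direction from the MGF bound to the tail bound is a routine Chernoff argument, while the converse is the delicate half and will carry the real work. Throughout one only has to track the upper tail, since $x$ is nonnegative and hence for $\lambda<0$ the MGF obeys $\mathbb{E}e^{\lambda x}\leqslant 1\leqslant\exp\left(\sigma^2\lambda^2/2\right)$ automatically.

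For the forward direction I would apply the Chernoff inequality: for every $t\geqslant 0$ and every admissible $0\leqslant\lambda<1/b$,
\begin{equation}
\mathbb{P}(x-\mu\geqslant t)\leqslant e^{-\lambda t}\,\mathbb{E}e^{\lambda(x-\mu)}\leqslant\exp\left(-\lambda t+\frac{\sigma^2\lambda^2}{2}\right),
\end{equation}
and then minimize the exponent over $\lambda$ under the constraint $\lambda<1/b$. The unconstrained optimizer is $\lambda^\star=t/\sigma^2$; when $t\in[0,\sigma^2/b]$ it is feasible and yields the Gaussian piece $\exp\left(-t^2/2\sigma^2\right)$, whereas when $t>\sigma^2/b$ the minimum is attained at the boundary $\lambda=1/b$, and using $\sigma^2/(2b^2)\leqslant t/(2b)$ produces the exponential piece $\exp\left(-t/2b\right)$. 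This reproduces exactly the two-regime definition.

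The converse, recovering the MGF bound from the tail bound, is where I expect the main obstacle. Here I would pass through the layer-cake representation
\begin{equation}
\mathbb{E}e^{\lambda(x-\mu)}=1+\lambda\int_{0}^{\infty}e^{\lambda t}\,\mathbb{P}(x-\mu\geqslant t)\,dt,
\end{equation}
valid for $\lambda>0$ (the contribution of the lower tail being harmless by nonnegativity), split the integral at the crossover $t=\sigma^2/b$, and insert the Gaussian and exponential tail pieces on the two ranges. On $[0,\sigma^2/b]$ the factor $e^{\lambda t}e^{-t^2/2\sigma^2}$ is handled by completing the square, contributing a term governed by $\exp\left(\sigma^2\lambda^2/2\right)$. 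The genuine difficulty is the tail range $(\sigma^2/b,\infty)$, where $e^{\lambda t}e^{-t/2b}$ fails to be integrable once $\lambda$ approaches $1/b$; naively the two exponents only cooperate for $\lambda<1/(2b)$. Thus the crux is that the clean ``iff'' with identical parameters $(\sigma^2,b)$ cannot be obtained by a direct integration, and one must either restrict attention to $\lambda$ bounded away from $1/b$, or — as in the cited references \cite{rao1991theory, van2013bernstein} — absorb the mismatch into universal rescalings of $(\sigma^2,b)$, so that the equivalence is understood up to constants. I would therefore carry out the converse with this parameter adjustment and verify that the resulting band and variance proxy remain of the claimed form, which is the step demanding the most care.
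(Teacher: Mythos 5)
The paper gives no proof of this lemma at all: it is quoted as a standard fact from the Orlicz-space literature (\cite{rao1991theory,van2013bernstein}), so your attempt is the only argument on the table rather than an alternative to an existing one. Your forward direction (MGF bound implies the two-piece tail) is complete and correct: the Chernoff exponent $-\lambda t+\sigma^2\lambda^2/2$ optimized over $\lambda\in[0,1/b)$ switches regimes exactly at $t=\sigma^2/b$, and the boundary case uses $\sigma^2/(2b^2)\leqslant t/(2b)$ precisely as you say. Your diagnosis of the converse is also the right one, and it is the more important half: integrating $e^{\lambda t}$ against the tail $e^{-t/(2b)}$ diverges once $\lambda\geqslant 1/(2b)$, so the stated ``iff'' with the \emph{same} pair $(\sigma^2,b)$ on both sides is not literally obtainable; the equivalence is standard only up to universal rescalings of the parameters (and, additionally, the MGF bound forces $\mu=\mathbb{E}[x]$ and should really be stated for the centered variable $x-\mu$, since $\exp(\sigma^2\lambda^2/2)=1+O(\lambda^2)$ near $\lambda=0$ kills any first-order term).

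The reason this caveat is worth recording is that the direction the paper actually uses downstream is the problematic one, tail-to-MGF: Corollary~\ref{orl_nor} extracts $\mathbb{E}\exp(x/\kappa)\leqslant\exp(\sigma^2/(2\kappa^2))$ for $\kappa\geqslant b$ from this lemma, and that feeds the Orlicz-norm bound $2\sqrt{p}$ for the centered $\chi^2$ variable used in Lemma~\ref{r_lem}. The damage is confined to the unspecified universal constants $c,C_1,C_2,C_3$ of Theorem~\ref{g_b}, so the paper's conclusions survive, but your own converse is still only a sketch: you have not carried out the two integrals on $[0,\sigma^2/b]$ and $(\sigma^2/b,\infty)$ over the adjusted band, nor handled the small-$\lambda$ regime where the additive constant produced by the layer-cake integral must be absorbed into a purely quadratic exponent (this needs a separate moment argument for centered variables, e.g. $\mathbb{E}e^{\lambda Y}\leqslant 1+\tfrac{\lambda^2}{2}\mathbb{E}[Y^2e^{|\lambda Y|}]$ when $\mathbb{E}Y=0$). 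Either completing that computation with explicit rescaled parameters, or restating the lemma as an equivalence up to universal constants with a pointer to \cite{van2013bernstein}, would close the gap.
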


\begin{corollary}
\label{orl_nor}
The Orlicz norm of a subexponential variable $x$ with parameters $(\sigma^2,b)$ is bounded by
\begin{equation}
\norm{x}_{\psi_1} \leqslant \max\(\frac{\sigma}{2\ln(2)},\;b\),
\end{equation}
\end{corollary}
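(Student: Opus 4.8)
The plan is to unwind the definition of $\norm{\cdot}_{\psi_1}$ and reduce the claim to exhibiting a single admissible scale. By definition $\norm{x}_{\psi_1}$ is the infimum of those $\eta > 0$ for which $\mathbb{E}\exp(x/\eta) \leqslant 2$, so it suffices to produce one value $\eta_0$ with $\mathbb{E}\exp(x/\eta_0) \leqslant 2$; the infimum is then automatically at most $\eta_0$. The whole argument is thus the search for the smallest such $\eta_0$, and the $\max(\cdot, b)$ structure of the statement should emerge from two competing constraints on that scale.

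The tool is Lemma \ref{subexp_lem}, read with $\lambda = 1/\eta$. I would first record its domain of validity: the bound $\mathbb{E}\exp(\lambda x) \leqslant \exp(\sigma^2 \lambda^2 / 2)$ holds only for $|\lambda| < 1/b$, i.e. for $\eta > b$. This is exactly where the $b$ term originates: when the scale dictated by the Gaussian part would fall below $b$, we are instead forced to take $\eta_0 > b$, since outside that range it is the subexponential tail, not the Gaussian surrogate, that governs the transform. On the range $\eta > b$ the lemma gives $\mathbb{E}\exp(x/\eta) \leqslant \exp(\sigma^2/(2\eta^2))$, and requiring the right-hand side to be at most $2$ reduces, upon taking logarithms, to $\sigma^2/(2\eta^2) \leqslant \ln 2$, that is $\eta \geqslant \sigma/\sqrt{2\ln 2}$. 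Choosing $\eta_0 = \max(\sigma/\sqrt{2\ln 2},\, b)$ satisfies both constraints at once, and the two branches of the maximum reproduce the two branches of the statement.

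The one genuinely delicate point — and what I expect to be the main obstacle — is the numerical constant on the Gaussian branch. The clean computation above yields $\sigma/\sqrt{2\ln 2}$, whereas the statement displays the strictly smaller $\sigma/(2\ln 2)$, and since a smaller upper bound is a stronger assertion, the direct moment generating estimate does not by itself deliver the displayed constant. I would therefore either (i) attempt to sharpen the transform estimate by exploiting the nonnegativity of $x$ and the actual two-regime tail from the Definition, rather than the blanket sub-Gaussian surrogate of Lemma \ref{subexp_lem}, in the hope of recovering the tighter constant, or (ii) carry the argument through with the honest constant $\sigma/\sqrt{2\ln 2}$ and record that the displayed $\sigma/(2\ln 2)$ corresponds to a coarser normalization of the same estimate. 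In either case the essential content that the subsequent application in Appendix \ref{ap_2} requires is untouched, namely that $\norm{x}_{\psi_1}$ is controlled by the larger of a fixed multiple of $\sigma$ and $b$, with constants independent of the model parameters.
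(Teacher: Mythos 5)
Your proof coincides with the paper's: it likewise invokes Lemma \ref{subexp_lem} at scale $\kappa = 1/\lambda \geqslant b$ and forces $\exp(\sigma^2/(2\kappa^2)) \leqslant 2$, which is exactly your computation and yields $\norm{x}_{\psi_1} \leqslant \max(\sigma/\sqrt{2\ln 2},\, b)$. Your concern about the constant is also well-founded: the displayed $\sigma/(2\ln 2)$ does not follow from the paper's own one-line argument (the algebra gives $\sigma/\sqrt{2\ln 2}$), so this is a slip in the stated corollary rather than a gap in your proof, and it is immaterial for the application in Appendix \ref{ap_2}, where only a bound of the form $\max(C\sigma, b)$ with an absolute constant is needed.
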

\begin{proof}
Lemma \ref{subexp_lem} implies
\begin{equation}
\mathbb{E}\exp\(\frac{x}{\kappa}\) \leqslant \exp\(\frac{\sigma^2}{2\kappa^2}\), \quad \forall \kappa \geqslant b.
\end{equation}
Bound the right hand side of the last inequality by $2$ to get the statement.
\end{proof}

Next we use the following tail bound on a $\chi^2$ variable to calculate the Orlicz norm of its centered version.
\begin{lemma}{\cite{laurent2000adaptive}}
\label{chi_b}
Let $\zeta \sim \chi^2(p)$, then
\begin{equation}
\mathbb{P}\(\zeta - p \geqslant 2\sqrt{pt}+2t\) \leqslant e^{-t},
\end{equation}
\begin{equation}
\mathbb{P}\(p - \zeta \geqslant 2\sqrt{pt}\) \leqslant e^{-t}.
\end{equation}
\end{lemma}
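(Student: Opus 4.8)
The plan is to derive both inequalities by the Cramér--Chernoff method, using the explicit Laplace transform of a chi-squared variable. Write $\zeta = \sum_{i=1}^{p} g_i^2$ with $g_i$ i.i.d.\ standard normal; then $\mathbb{E}[e^{\lambda g_i^2}] = (1-2\lambda)^{-1/2}$ for $\lambda < \frac{1}{2}$, so that $\mathbb{E}[e^{\lambda(\zeta - p)}] = e^{-\lambda p}(1-2\lambda)^{-p/2}$ on that range, while $\mathbb{E}[e^{-\lambda(\zeta - p)}] = e^{\lambda p}(1+2\lambda)^{-p/2}$ holds for every $\lambda > 0$. The two tails must be treated separately precisely because the positive exponential moment exists only up to $\lambda = \frac{1}{2}$, which is exactly what makes the upper tail sub-exponential (the extra $2t$ term) while the lower tail stays sub-Gaussian (no correction in $t$).

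For the upper tail I would apply the exponential Markov (Chernoff) bound to $e^{\lambda(\zeta - p)}$ and minimise over $0 < \lambda < \frac{1}{2}$. The minimiser is $\lambda^\star = \frac{s}{2(p+s)}$, and substituting it produces the exact Cramér rate function, so that
\begin{equation}
\mathbb{P}(\zeta - p \ge s) \le \exp\left(-\frac{1}{2}\left[s - p\log{(1 + s/p)}\right]\right).
\end{equation}
Substituting the target value $s = 2\sqrt{pt} + 2t$ and setting $x = \sqrt{t/p}$ (so that $s/p = 2x + 2x^2$), the desired inequality $\frac{1}{2}[s - p\log{(1+s/p)}] \ge t$ reduces, after cancelling the $2x^2$ terms, to $2x - \log{(1 + 2x + 2x^2)} \ge 0$, that is, to the elementary bound $1 + 2x + 2x^2 \le e^{2x}$, which holds term-by-term from the Taylor series of the exponential.

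For the lower tail the computation is cleaner and needs no range restriction: from $\mathbb{E}[e^{\lambda(p-\zeta)}] = e^{\lambda p}(1+2\lambda)^{-p/2}$ together with the elementary inequality $\log{(1+2\lambda)} \ge 2\lambda - 2\lambda^2$ (valid for all $\lambda \ge 0$, since $u \mapsto \log{(1+u)} - u + u^2/2$ has nonnegative derivative $u^2/(1+u)$) one gets the purely sub-Gaussian estimate $\mathbb{E}[e^{\lambda(p-\zeta)}] \le e^{p\lambda^2}$. The Chernoff bound then yields $\mathbb{P}(p - \zeta \ge s) \le \exp(-\lambda s + p\lambda^2)$, and minimising at $\lambda = s/(2p)$ gives $\exp(-s^2/(4p))$, which equals $e^{-t}$ at $s = 2\sqrt{pt}$. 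The only delicate point is matching the precise constants $2\sqrt{pt} + 2t$ in the upper tail: a crude surrogate such as bounding the cumulant generating function by $p\lambda^2/(1-2\lambda)$ at a suboptimal $\lambda$ loses the sharp coefficients, because the positive Laplace transform diverges as $\lambda \uparrow \frac{1}{2}$, and it is the exact optimisation that collapses the problem to the single clean inequality $1 + 2x + 2x^2 \le e^{2x}$. Since the statement is quoted verbatim from \cite{laurent2000adaptive}, one may alternatively invoke their Lemma~1 with all coefficients equal to one, which reproduces the two stated bounds directly.
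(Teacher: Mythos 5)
Your proof is correct. Note that the paper does not prove this lemma at all: it is quoted verbatim from Laurent and Massart \cite{laurent2000adaptive} (their Lemma~1 with all weights equal to one), so there is no in-paper argument to compare against. Your Chernoff-method derivation is the standard one underlying that cited result, and every step checks out: the optimiser $\lambda^\star = \frac{s}{2(p+s)}$ lies in $(0,\tfrac12)$, the resulting rate function $\frac{1}{2}\left[s - p\log{\left(1+s/p\right)}\right]$ evaluated at $s = 2\sqrt{pt}+2t$ does reduce to the inequality $1+2x+2x^2 \leqslant e^{2x}$ with $x=\sqrt{t/p}$ (which holds term by term), and the lower-tail bound $\log{\left(1+2\lambda\right)} \geqslant 2\lambda - 2\lambda^2$ gives the clean sub-Gaussian estimate $e^{-s^2/(4p)}$, matching $e^{-t}$ at $s=2\sqrt{pt}$. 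Your closing remark about why the two tails behave differently (the positive exponential moment existing only for $\lambda<\tfrac12$) is also the right intuition. A self-contained proof like yours is a strict improvement in completeness over the paper's bare citation.
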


\begin{corollary}
Let $\zeta \sim \chi^2(p)$, then the Orlicz norm of $|\zeta-p|$ is bounded by
\begin{equation}
\norm{|\zeta-p|}_{\psi_1} \leqslant 2\sqrt{p}.
\end{equation}
\end{corollary}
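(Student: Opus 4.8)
The plan is to read off subexponential parameters $(\sigma^2,b)$ for the nonnegative variable $|\zeta-p|$ from the two $\chi^2$ tail bounds of Lemma \ref{chi_b}, and then invoke Corollary \ref{orl_nor}, which converts such parameters into the Orlicz-norm estimate $\norm{|\zeta-p|}_{\psi_1}\le\max(\sigma/(2\ln2),\,b)$. Since the target is $2\sqrt p$ and $b$ will turn out to be an absolute constant, it suffices to show that $\sigma$ is of order $\sqrt p$ with the right numerical constant.

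First I would put the upper tail into the two-regime form required by the definition of a subexponential variable. The Laurent--Massart bound specifies the deviation level $u=2\sqrt{pt}+2t$ only implicitly as a function of $t$, so I would invert it: solving the quadratic in $\sqrt t$ gives $t=\tfrac12\big(p+u-\sqrt{p^2+2pu}\big)$, and rationalizing shows $u^2/(2t)=p+u+\sqrt{p^2+2pu}$. Hence $\mathbb{P}(\zeta-p\ge u)\le\exp(-u^2/(2\sigma^2))$ holds precisely when $\sigma^2\ge p+u+\sqrt{p^2+2pu}$; as $u\to0$ this forces only $\sigma^2\ge 2p$, matching the variance of $\chi^2(p)$, whereas for $u\gg p$ one has $u\approx 2t$ and the bound degrades to the purely exponential regime $\exp(-u/2)$, identifying the scale $b$ as an absolute constant. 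The lower tail $\mathbb{P}(p-\zeta\ge u)\le\exp(-u^2/(4p))$ is purely subgaussian and strictly lighter, so it never controls the parameters.

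Combining the two tails yields the subexponential profile of $|\zeta-p|$: a subgaussian core with variance proxy $\sigma^2\asymp 2p$ crossing over to a linear-exponential tail with $b$ constant. Feeding this into Corollary \ref{orl_nor} gives $\norm{|\zeta-p|}_{\psi_1}\le\max(\sigma/(2\ln2),\,b)$, and since $\sigma=O(\sqrt p)$ dominates the constant $b$ for all $p\ge1$, the right-hand side is bounded by $2\sqrt p$.

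The main obstacle is the constant bookkeeping rather than the structure of the argument. The naive inversion $2\sqrt{pt}+2t\le4\sqrt{pt}$ produces a variance proxy of order $8p$, whose contribution $\sqrt{8p}/(2\ln2)\approx2.04\sqrt p$ overshoots the claimed $2\sqrt p$; to land exactly at $2\sqrt p$ one must retain the sharper proxy $\approx 2p$ coming from the exact inversion, and must also treat the two-sidedness of $|\zeta-p|$ carefully, since the union of the two tails introduces a spurious factor of two that has to be absorbed (for instance into the location $\mu$ of the subexponential profile, using that the left tail is lighter) so that it does not inflate $\sigma$. A direct route through the closed-form generating function $\mathbb{E}e^{\lambda\zeta}=(1-2\lambda)^{-p/2}$ is also available, verifying $\mathbb{E}e^{|\zeta-p|/\eta}\le2$ at $\eta=2\sqrt p$; but the crude symmetrization $e^{|s|}\le e^{s}+e^{-s}$ yields a sum $\approx 2e^{1/4}>2$, so this route likewise demands exploiting the asymmetry between the upper and lower tails.
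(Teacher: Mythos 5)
Your route is exactly the paper's: the paper's entire proof of this corollary is the sentence ``follows by a straightforward calculation from Lemma~\ref{chi_b} and Corollary~\ref{orl_nor},'' i.e., read off subexponential parameters from the Laurent--Massart tails and feed them into the Orlicz-norm bound, which is precisely your plan. There is therefore no methodological divergence to report. What your constant bookkeeping has correctly detected, however, is that the ``straightforward calculation'' does not actually deliver the constant $2$, and this is not a defect of your argument: the statement as written fails for small $p$. For $p=1$ the moment generating function of $\chi^2(1)$ diverges at argument $1/2$, so $\mathbb{E}\exp(|\zeta-1|/2)=+\infty$ and $\norm{|\zeta-1|}_{\psi_1}>2=2\sqrt{1}$. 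More generally, using $\mathbb{E}e^{\lambda\zeta}=(1-2\lambda)^{-p/2}$ together with the identity $e^{|x|}=e^{x}+e^{-x}-e^{-|x|}$, one computes $\mathbb{E}\exp(|\zeta-p|/(2\sqrt{p}))\approx 2.23$ at $p=2$ and $\approx 2.05$ at $p=4$, both exceeding $2$; only for large $p$ does the quantity settle near $1.95<2$ (the CLT limit $\mathbb{E}\exp(|Z|/\sqrt{2})$ for standard normal $Z$), which is exactly the asymmetry effect you invoke at the end. So none of the escape hatches you list -- the exact inversion of $u=2\sqrt{pt}+2t$, absorbing the union bound into the location $\mu$, or the MGF route -- can rescue the constant $2$ uniformly in $p$, and your suspicion that the $2.04\sqrt{p}$ overshoot is not merely an artifact of crude estimates is correct. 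A second, smaller instance of the same looseness sits in Corollary~\ref{orl_nor} itself: bounding $\exp(\sigma^2/(2\kappa^2))$ by $2$ gives $\kappa\geqslant\sigma/\sqrt{2\ln 2}$, not $\sigma/(2\ln 2)$, so even the conversion step you are asked to rely on is off by the placement of a square root.

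The gap is thus in the target constant, not in your method, and it is harmless downstream: the corollary is only invoked inside the proof of Lemma~\ref{r_lem} with $np$ degrees of freedom, where the constant is absorbed into $C_3$, so all that is actually needed is $\norm{|\zeta-p|}_{\psi_1}\leqslant C\sqrt{p}$ for an absolute constant $C$ (e.g.\ $C=3$ suffices for all $p\geqslant 1$, and $C=2$ for $p$ large). That weaker statement does follow from your outline: your exact inversion gives $\mathbb{P}(\zeta-p\geqslant u)\leqslant\exp\left(-u^2/\left(2(p+u+\sqrt{p^2+2pu})\right)\right)\leqslant\exp\left(-\min\left(u^2/(8p),\,u/8\right)\right)$, the lower tail is strictly lighter, and either integrating this two-regime tail against $\eta^{-1}e^{u/\eta}$ or bounding the two one-sided MGFs separately via $-\ln(1-x)\leqslant x+x^2$ closes the argument with an explicit $C$. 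I would state and prove the corollary in that form rather than chase the constant $2$.
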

\begin{proof}
Follows by a straight forward calculation from Lemma \ref{chi_b} and Corollary \ref{orl_nor}.
\end{proof}

In the course of proof of Theorem \ref{g_b} we also use the following tail bound on the spectral norm deviations of a standard Wishart matrix from its mean.
\begin{lemma}{\cite{vershynin2010introduction}}
\label{gau_sb}
Let $\W=\frac{1}{n}\sum_{i=1}^n \w_i\w_i^T$, where $\w_i$ are i.i.d. copies of $\w \sim \mathcal{N}(0,\I)$, then
\begin{equation}
\mathbb{P}\(\norm{\W-\I}_2 \geq \sqrt{\frac{p}{n}} + t\) \leqslant 2e^{-nt^2/2},
\end{equation}
\end{lemma}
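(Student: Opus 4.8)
The plan is to reduce the operator-norm deviation of $\W$ to a supremum of centered chi-squared fluctuations over the unit sphere $\mathbb{S}^{p-1}$, and then to control that supremum by an $\epsilon$-net argument fed with the chi-squared tail bound of Lemma \ref{chi_b}. Write $\A = [\w_1,\dots,\w_n]^T \in \mathbb{R}^{n\times p}$, so that $\W = \frac{1}{n}\A^T\A$ and $\W-\I$ is symmetric. For a unit vector $\u$ the quadratic form satisfies
\begin{equation*}
\u^T(\W-\I)\u = \frac{1}{n}\sum_{i=1}^n(\w_i^T\u)^2 - 1 = \frac{1}{n}\(\zeta_\u - n\),
\end{equation*}
where $\zeta_\u = \sum_i(\w_i^T\u)^2 \sim \chi^2(n)$, since each $\w_i^T\u \sim \mathcal{N}(0,1)$ and the $\w_i$ are independent. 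Thus the value of $\W-\I$ in every direction $\u$ is an exactly scaled, centered chi-squared variable to which Lemma \ref{chi_b} applies verbatim.

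First I would record the deterministic reduction $\norm{\W-\I}_2 \leq (1-2\epsilon)^{-1}\max_{\u\in\mathcal{N}}|\u^T(\W-\I)\u|$, valid for any symmetric matrix and any $\epsilon$-net $\mathcal{N}$ of $\mathbb{S}^{p-1}$; taking $\epsilon = 1/4$ gives the factor $2$ and a net of cardinality $|\mathcal{N}| \leq 9^p$. For a single $\u$, rewriting Lemma \ref{chi_b} with parameter $n$ in place of the dimension and dividing by $n$ yields
\begin{equation*}
\mathbb{P}\!\(\u^T(\W-\I)\u \geq 2\sqrt{\tfrac{s}{n}}+\tfrac{2s}{n}\) \leq e^{-s}, \qquad \mathbb{P}\!\(-\u^T(\W-\I)\u \geq 2\sqrt{\tfrac{s}{n}}\) \leq e^{-s}.
\end{equation*}
A union bound over the net then gives a deviation of order $\sqrt{s/n}+s/n$ with failure probability at most $2\cdot 9^p e^{-s}$.

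The key balancing step is to split the exponent as $s = p\ln 9 + \tfrac{n t^2}{2}$, so that $9^p e^{-s} = e^{-nt^2/2}$ and the net cardinality is exactly absorbed, leaving the residual tail $2e^{-nt^2/2}$ as required. Using $\sqrt{a+b}\leq\sqrt{a}+\sqrt{b}$ separates the two contributions: the first produces a term of order $\sqrt{p/n}$ (the deterministic spread coming from the net's metric entropy) and the second a term of order $t$. After multiplying by the net factor $2$ and collecting the lower-order $s/n$ piece, this reproduces the form $\norm{\W-\I}_2 \leq \sqrt{p/n}+t$ up to the absolute constants tabulated in \cite{vershynin2010introduction}.

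I expect the main obstacle to be precisely this balancing. Because the chi-squared tail of Lemma \ref{chi_b} crosses over from its sub-gaussian $2\sqrt{ns}$ term to its sub-exponential $2s$ term, one must verify that the relevant deviation $\sqrt{s/n}+s/n$ stays in the sub-gaussian regime over the target range of $t$, and that the linear $s/n$ remainder is dominated by the $\sqrt{p/n}$ term under the natural normalization $p\leq n$. Recovering the clean leading coefficient $1$ in front of $\sqrt{p/n}$, rather than the crude $2\sqrt{\ln 9}$ that the net produces, is exactly the point where the sharper Gaussian comparison (Gordon/Slepian) argument behind \cite{vershynin2010introduction} is needed; the self-contained net argument above reproduces the stated dependence on $p$, $n$ and $t$ with a worse absolute constant, which is all that the subsequent uses of the lemma require.
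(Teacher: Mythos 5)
The paper itself offers no proof of Lemma \ref{gau_sb}: it is imported verbatim from \cite{vershynin2010introduction}, where it rests on Gordon's Gaussian comparison (Slepian--Gordon) inequality for the extreme singular values of the $n\times p$ Gaussian matrix $\A=[\w_1,\dots,\w_n]^T$, combined with concentration of Lipschitz functions of Gaussian vectors. Your net argument is therefore a genuinely different, more elementary and self-contained route: the quadratic-form identity $\u^T(\W-\I)\u=(\zeta_\u-n)/n$ with $\zeta_\u\sim\chi^2(n)$, the factor-$2$ reduction to a $1/4$-net of cardinality $9^p$, the Laurent--Massart tail of Lemma \ref{chi_b}, and the exponent split $s=p\ln 9+nt^2/2$ that exactly absorbs the net cardinality are all correct, and they deliver $\norm{\W-\I}_2\leqslant 4\sqrt{\ln 9}\sqrt{p/n}+2\sqrt{2}\,t+4\ln 9\,(p/n)+2t^2$ with probability at least $1-2e^{-nt^2/2}$, i.e.\ the advertised bound up to absolute constants in the regime $p\lesssim n$, $t\lesssim 1$. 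What the Gordon route buys is the sharp leading constant and validity without a net-entropy loss; what your route buys is that it needs nothing beyond Lemma \ref{chi_b}, which the paper already states.

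The one claim you should retract is that the residual gap (leading constant $1$, clean additive $t$, all $t\geqslant 0$) is closed by the Gordon/Slepian argument. It is not, and it cannot be: as literally stated the lemma is false for large $t$, since $\norm{\W-\I}_2\geqslant \norm{\w_1}^2/n-1$ and $\norm{\w_1}^2\sim\chi^2(p)$ has a sub-exponential upper tail, so no bound of the form $2e^{-nt^2/2}$ can hold uniformly in $t$. Gordon plus Gaussian concentration gives the sub-gaussian statement only at the level of $s_{\max}(\A)\leqslant\sqrt{n}+\sqrt{p}+t$; squaring to reach the Gram matrix $\W$ produces a deviation $2\delta+\delta^2$ with $\delta=\sqrt{p/n}+t/\sqrt{n}$, and the accurate corollary in \cite{vershynin2010introduction} keeps this quadratic term (a bound of the form $3\max(\delta,\delta^2)$); the quoted lemma silently drops it. So your constant-degraded, $t\lesssim 1$ version is essentially the strongest true statement, and it does suffice where the lemma is used (the proof of Lemma \ref{r_lem}), since all such constants are absorbed into $C_3$ there. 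Note, though, that the paper's own invocation in Lemma \ref{r_lem} takes $t$ of order $\sqrt{p/n}\cdot\sqrt{1+\ln(2K)/p}$, which need not lie in the sub-gaussian regime; that defect originates in the statement of the lemma, not in your argument.
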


\begin{theorem}[Theorem 1 from \cite{adamczak2011sharp}]
\label{adam_th}
Let $\r_1,\dots,\r_K \in \mathbb{R}^l$ be independent random vectors (not necessarily identically distributed). Assume that there exists $ \psi$, such that
\begin{equation}
\max_{1\leqslant k \leqslant K} \max_{\p \in \mathcal{S}^{l-1}} \norm{|\langle \r_k, \p \rangle|}_{\psi_1} \leqslant \psi,
\label{cond_1}
\end{equation}
where $\mathcal{S}^{l-1} \subset \mathbb{R}^l$ is a unit sphere, and there exists $\mathcal{K}>1$, such that
\begin{equation}
\mathbb{P}\(\frac{1}{\sqrt{l}}\max_{1\leqslant k \leqslant K} \norm{\r_k} \geqslant \mathcal{K}\max\[1,\(\frac{K}{l}\)^{1/4}\]\) \leqslant e^{-\sqrt{l}},
\label{cond_2}
\end{equation}
then with probability at least $1-2e^{-c\sqrt{l}}$
\begin{equation}
\norm{\R}_2 \leqslant \norm{\E[\R\R^T]}_2^{1/2} + C(\psi+\mathcal{K})\sqrt{\frac{l}{K}},
\end{equation}
where $\R =[\r_1,\dots,\r_K]$ and $c, C$ are universal constants.
\end{theorem}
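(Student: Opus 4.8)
Since $\norm{\R}_2^2 = \norm{\R\R^T}_2 = \sup_{\p \in \mathcal{S}^{l-1}} \sum_{k=1}^K \langle \r_k, \p\rangle^2$, the statement is a deviation bound for the supremum of a quadratic empirical process indexed by the sphere. The plan is to split this supremum into its expectation and a fluctuation. By subadditivity of the supremum,
\begin{equation}
\norm{\R}_2^2 \le \norm{\mathbb{E}[\R\R^T]}_2 + Z, \qquad Z := \sup_{\p \in \mathcal{S}^{l-1}} \Big| \sum_{k=1}^K \big( \langle \r_k, \p\rangle^2 - \mathbb{E}\langle \r_k, \p\rangle^2 \big) \Big|,
\end{equation}
since $\sup_\p \sum_k \mathbb{E}\langle\r_k,\p\rangle^2 = \norm{\mathbb{E}[\R\R^T]}_2$. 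Thus everything reduces to bounding $Z$ and checking that, after taking square roots, it yields exactly the additive correction $C(\psi+\mathcal{K})\sqrt{l/K}$.

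First I would symmetrize: a standard symmetrization inequality lets me replace the centered summands by $\varepsilon_k \langle \r_k, \p\rangle^2$ with independent Rademacher signs $\varepsilon_k$, conditionally on $\r_1,\dots,\r_K$, turning $Z$ into a symmetric process suitable for chaining. The fundamental difficulty is already visible here: by hypothesis (\ref{cond_1}) each linear form $\langle \r_k, \p\rangle$ has $\norm{\cdot}_{\psi_1}$ norm at most $\psi$, so its square is only sub-Weibull of order $1/2$, with tails too heavy for a single $\varepsilon$-net plus union bound to yield the sharp $\sqrt{l/K}$ rate.

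The technical core is to tame these tails by truncation driven by the second hypothesis. On the event $\Omega = \{ \max_k \norm{\r_k} \le \mathcal{K}\sqrt{l}\max[1,(K/l)^{1/4}] \}$, which by (\ref{cond_2}) has probability at least $1 - e^{-\sqrt{l}}$, all columns are uniformly bounded, so the summands $\varepsilon_k \langle \r_k, \p\rangle^2$ become bounded and the contribution of $\Omega^c$ is negligible. On $\Omega$ the increments $\p \mapsto \sum_k \varepsilon_k\langle\r_k,\p\rangle^2$ satisfy a Bernstein-type mixed-tail estimate as a function of $\norm{\p-\p'}$, the subgaussian scale being governed by $\psi$ and the subexponential scale by the truncation radius. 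I would then apply the generic-chaining (majorizing-measure) bound for processes with such two-level increments, using that the Talagrand functionals of the sphere obey $\gamma_2(\mathcal{S}^{l-1}) \sim \sqrt{l}$ and $\gamma_1(\mathcal{S}^{l-1}) \sim l$. The delicate point is that the precise form of (\ref{cond_2}), with the exponent $1/4$ on $K/l$, is exactly what makes the $\gamma_1$ contribution of the subexponential part lower order, leaving a leading term proportional to $\psi\sqrt{l}$ times the ``diameter'' of the process, which is itself controlled by $\max_k\norm{\r_k}$ and $\norm{\R}_2$.

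Because this diameter reintroduces $\norm{\R}_2$ on the right-hand side, the final step is a self-bounding argument: the chaining estimate becomes a quadratic inequality in $\norm{\R}_2$, which I solve to isolate $\norm{\R}_2 \le \norm{\mathbb{E}[\R\R^T]}_2^{1/2} + C(\psi+\mathcal{K})\sqrt{l/K}$ after absorbing universal constants. To upgrade this in-expectation bound to the stated high-probability form, I would apply Talagrand's concentration inequality for suprema of (on $\Omega$, bounded) empirical processes and add back the $e^{-\sqrt{l}}$ mass of $\Omega^c$, yielding probability at least $1 - 2e^{-c\sqrt{l}}$. I expect the main obstacle to be precisely the heavy sub-Weibull$(1/2)$ tails of the squared linear forms: overcoming them requires the interplay of truncation against (\ref{cond_2}), a mixed-tail generic-chaining bound in place of any union bound, and the self-bounding extraction of $\norm{\R}_2$ from both sides.
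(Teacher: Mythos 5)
You should first note that the paper does not prove this statement at all: it is imported verbatim as Theorem 1 of the cited reference \cite{adamczak2011sharp}, so the only proof to compare against is the one in that reference. Your outline does reproduce the architecture of that external proof --- the variational identity $\norm{\R}_2^2=\sup_{\p}\sum_k\langle\r_k,\p\rangle^2$, centering, symmetrization, truncation on the event supplied by (\ref{cond_2}), a chaining bound for the symmetrized quadratic process, a self-bounding quadratic inequality, and a final concentration step --- so as a roadmap it points in the right direction.

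The gap is that the one step carrying all the difficulty, the chaining bound, is asserted rather than established, and in the form you describe it does not close. Conditionally on the data, the increments of $\p\mapsto\sum_k\varepsilon_k\langle\r_k,\p\rangle^2$ are subgaussian with respect to the random metric $d(\p,\p')\leqslant 2\max_k|\langle\r_k,\p-\p'\rangle|\cdot\norm{\R}_2\leqslant 2\max_k\norm{\r_k}\,\norm{\R}_2\,\norm{\p-\p'}$, so a direct $\gamma_2(\mathcal{S}^{l-1})\sim\sqrt{l}$ estimate gives $Z\lesssim\sqrt{l}\,\max_k\norm{\r_k}\,\norm{\R}_2\lesssim l\,\mathcal{K}\max[1,(K/l)^{1/4}]\,\norm{\R}_2$, and your self-bounding step then yields an additive error of order $l\,\mathcal{K}$ --- larger than the claimed $C(\psi+\mathcal{K})\sqrt{l/K}$ by a factor of roughly $\sqrt{lK}$. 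Invoking instead a mixed-tail $\gamma_1/\gamma_2$ theorem with $\gamma_1(\mathcal{S}^{l-1})\sim l$ does not rescue this: the subexponential scale of the increments is again governed by $\max_k\norm{\r_k}$, and the $\gamma_1$ term alone overshoots. The actual content of Adamczak et al.'s argument is precisely how the $\psi_1$ hypothesis (\ref{cond_1}) enters the chaining: at each scale they split the index set $\{1,\dots,K\}$ according to the magnitude of $|\langle\r_k,\p\rangle|$ and bound the cardinality of the set of large coordinates, and only this combinatorial refinement produces the $\sqrt{l/K}$ rate. Your assertion that the exponent $1/4$ in (\ref{cond_2}) ``is exactly what makes the $\gamma_1$ contribution lower order'' is therefore not a proof step but a restatement of the thing to be proved.
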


\begin{proof}[Proof of Lemma \ref{r_lem}]
To make the calculations easier in this proof we assume that the columns of $\R$,
\begin{equation}
\R =[\r_1,\dots,\r_K],
\end{equation}
are constructed as $\r_k = \vec{\S_k-\Q_k}$ and therefore, $\R \in \mathbb{R}^{p^2 \times K}$. This may only affect the numerical constants by the magnitude up to $2$. In order to apply Theorem \ref{adam_th} we need to make sure the conditions (\ref{cond_1}) and (\ref{cond_2}) hold. For this purpose fix a matrix $\P$ and consider the univariate variable $|\langle \r_k, \p \rangle|$, where $\p = \vec{\P}$, for some fixed $k$. Note that
\begin{equation}
\langle \r_k, \p \rangle = \Tr{(\S_k-\Q_k)\P} = \sum_i \x_k^{iT}\P\x_k^i - \Tr{\Q_k\P},
\label{psn}
\end{equation}
and among all the matrices $\P$ of norm one, the Orlicz norm of the right-hand side of (\ref{psn}) is maximized by
\begin{equation}
\P_k = \frac{\Q_k^{-1}}{\norm{\Q_k^{-1}}_F}.
\end{equation}
For this choice of $\P_k$ we obtain
\begin{equation}
|\Tr{(\S_k-\Q_k)\P}| = \frac{|\frac{1}{n}\sum_i\norm{\x_k^i}^2 - p|}{\norm{\Q_k^{-1}}_F} \sim \frac{\frac{1}{n}|\zeta - np|}{\norm{\Q_k^{-1}}_F},
\end{equation}
where $\zeta \sim \chi^2(np)$ and, thus,
\begin{equation}
\max_{\norm{\P}_F=1} \norm{|\Tr{(\S_k-\Q_k)\P}|}_{\psi_1} \leqslant \frac{2\sqrt{np}}{n\norm{\Q_k^{-1}}_F} \leqslant \frac{2\lambda_1(\Q_k)}{\sqrt{n}},
\label{onee}
\end{equation}
where we have used the inequality
\begin{equation}
\norm{\Q_k^{-1}}_F \geqslant \frac{\sqrt{p}}{\lambda_1(\Q_k)}.
\end{equation}
The bound (\ref{onee}) finally yields
\begin{equation}
\max_k \max_{\norm{\P}_F=1} \norm{|\Tr{(\S_k-\Q_k)\P}|}_{\psi_1} \leqslant \frac{2\overline\lambda}{\sqrt{n}},
\end{equation}
and therefore the (\ref{cond_1}) holds true for
\begin{equation}
\psi = \frac{2\overline\lambda}{\sqrt{n}}.
\end{equation}
In order to verify the boundedness condition (\ref{cond_2}), consider the variable
\begin{equation}
\max_k \frac{\norm{\r_k}}{\sqrt{p^2}} = \max_k \frac{\norm{\S_k-\Q_k}_F}{p} \leqslant \frac{\overline\lambda}{\sqrt{p}} \max_k \norm{\W-\I}_2,
\end{equation}
where $\W_k = \Q_k^{-1/2}\S_k\Q_k^{-1/2}$. Denote
\begin{equation}
\delta = \max\[1,\;\(\frac{K}{p^2}\)^{1/4}\]
\end{equation}
and use Lemma \ref{gau_sb} to obtain
\begin{align}
&\mathbb{P}\(\frac{1}{p}\max_k \norm{\r_k} \geqslant \mathcal{K}\delta\) \leqslant \mathbb{P}\(\frac{\overline\lambda}{\sqrt{p}} \max_k \norm{\W_k-\I}_2 \geqslant \mathcal{K} \delta \) \nonumber\\
&\leqslant K\mathbb{P}\(\norm{\W_k-\I}_2 \geqslant \mathcal{K}\delta\frac{\sqrt{p}}{\overline\lambda}\) \nonumber\\
&\leqslant 2K \exp\(-\frac{n}{2}\[\mathcal{K}\delta\frac{\sqrt{p}}{\overline\lambda} - \sqrt{\frac{p}{n}}\]^2\).
\end{align}
Set
\begin{equation}
\mathcal{K} = \frac{\overline\lambda}{\sqrt{n}\delta}(\sqrt{2(1 +\ln{2K}/p)} + 1)
\end{equation}
to get
\begin{equation}
\mathbb{P}\(\frac{1}{p}\max_k \norm{\r_k} \geqslant \mathcal{K}\delta\) \leqslant e^{-p},
\end{equation}
thus the conditions of Theorem 1 from \cite{adamczak2011sharp} are satisfied with the constants $\phi$ and $\mathcal{K}$. Let us bound the spectral norm of $\mathbb{E}[\R\R^T]$. For this purpose note that
\begin{equation}
\norm{\mathbb{E}[\R\R^T]}_2 \leqslant \overline\lambda^2\norm{\mathbb{E}[\M\M^T]}_2,
\end{equation}
where
\begin{equation}
\M = \[\m_1,\dots,\m_K\],
\end{equation}
and $\m_k = \vec{\W_k-\I}$, with $\W_k = \frac{1}{n}\sum_{i=1}^n \w_i^k\w_i^{kT}$, and $\w_i^k \sim \mathcal{N}(0,\I),\;i=1,\dots,n,\;k=1,\dots,K$. Note that for $\zeta \sim \chi^2(n)$,
\begin{equation}
\mathbb{E}[(\zeta-n)^2] = 2n,
\end{equation}
and for $\gamma_i,\;\omega_i \sim \mathcal{N}(0,1),\;i=1,\dots,n$, i.i.d.,
\begin{equation}
\mathbb{E}\[\(\sum_{i=1}^n \gamma_i\omega_i\)^2\] = n,
\end{equation}
to obtain 
\begin{equation}
\mathbb{E}[\M\M^T] = 
\frac{1}{n^2}\begin{pmatrix}
2n\I_p & 0 & 0 \\
0 & n\I_{l-p} & n\I_{l-p} \\
0 & n\I_{l-p} & n\I_{l-p}
\end{pmatrix} \in \mathbb{R}^{p^2\times p^2},
\end{equation}
where for convenience we have ordered the elements of $\vec{\A}$ in such a way that the diagonal of $\A$ goes first, then the upper triangular part and then the lower triangular part. We, therefore, obtain,
\begin{equation}
\norm{\mathbb{E}[\R\R^T]}_2 \leqslant  \frac{2\overline\lambda^2}{n}.
\end{equation}
Finally we obtain that with probability at least $1-2e^{-cp}$,
\begin{align}
&\norm{\R}_2 \leqslant \overline\lambda\sqrt{\frac{2}{n}} + C_2(\phi+\mathcal{K})\sqrt{\frac{p^2}{K}} \nonumber\\ 
&= \overline\lambda\sqrt{\frac{2}{n}} +  \frac{C_2\overline\lambda}{\sqrt{n}}\(2+\frac{\sqrt{2(1 +\ln{2K}/p)} + 1}{\delta}\)\sqrt{\frac{p^2}{K}} \nonumber\\
&\leqslant \overline\lambda\(\sqrt{\frac{2}{n}} +  C_3\sqrt{\frac{p^2}{nK}}\),
\end{align}
where we have used the fact that
\begin{equation}
\frac{\sqrt{2(1 +\ln{2K}/p)} + 1}{\delta} = \frac{\sqrt{2(1 +\ln{2K}/p)} + 1}{\max\[1,\;\(\frac{K}{p^2}\)^{1/4}\]}
\end{equation}
is bounded. Now replace $p^2$ back by $l$, which can at most affect the constants by a factor up to $2$, to get the statement.
\end{proof}

\bibliographystyle{IEEEtran}
\bibliography{ilya_bib}

\end{document}